\numberwithin{equation}{section}
\newtheorem{proposition}{Proposition}[section]
\newtheorem{theorem}{Theorem}[section]
\newtheorem{corollary}{Corollary}[section]
\newtheorem{remark}{Remark}[section]
\newtheorem{convention}{Convention}
\newtheorem{notation}{Notation}[section]
\newcommand{\pencil}{P}
\newcommand{\bpencil}{BP}
\begin{document}

\title{On the Teichm\"uller space  of acute triangles}
\author{Hideki Miyachi, Ken'ichi Ohshika, and Athanase Papadopoulos}
\address{Hideki Miyachi,
School of Mathematics and Physics,
College of Science and Engineering,
Kanazawa University,
Kakuma-machi, Kanazawa,
Ishikawa, 920-1192, Japan}
\email{miyachi@se.kanazawa-u.ac.jp}  
\address{Ken'ichi Ohshika,
Department of Mathematics,
Gakushuin University,
Mejiro, Toshima-ku, Tokyo, Japan}
  \email{ohshika@math.gakushuin.ac.jp}
\address{Athanase Papadopoulos, 
Institut de Recherche Math\'ematique Avanc\'ee (Universit\'e de Strasbourg et CNRS), 
7 rue Ren\'e Descartes
67084 Strasbourg Cedex France}
 \email{papadop@math.unistra.fr}
\date{\today}
\date{}
\maketitle

\begin{abstract}
We continue the study of the analogue of Thurston's metric on the Teichm\"{u}ller space of Euclidean triangle which was started by Saglam-Papadopoulos in \cite{SP}.
By direct calculation, we give explicit expressions of the distance function and the Finsler structure of the metric restricted to the subspace of acute triangles.
We deduce from the form of the Finsler unit sphere a result on the infinitesimal rigidity of the metric.
We give a description of the maximal stretching loci for a family of extreme Lipschitz maps.

\medskip 
\noindent {\bf Keywords:} Thurston's asymmetric metric, Lipschitz metric, extreme Lipschitz maps, stretch locus, Teichm\"uller theory, space of Euclidean triangles, geodesics, Finsler structure

\medskip

\noindent {\bf AMS codes:} 30F60, 51F99, 57M50,  32G15, 57K20
\end{abstract}

\section{Introduction}

In his paper \cite{thurston}, William Thurston introduced an asymmetric metric on the Teichm\"uller space of complete  hyperbolic metrics of finite type on a given surface, where the distance between two points is defined as the logarithm of the smallest dilatation constant of a Lipschitz homeomorphism between two marked surfaces equipped with hyperbolic structures representing these points. Thurston discovered several properties of this metric, called now \emph{Thurston's metric}. In particular, he gave a description of a large class of geodesics, called stretch lines,  he showed that any two points are connected by a geodesic, he proved that  this metric is Finsler, and he gave a description of the unit sphere of the conorm of this Finsler structure at each cotangent space of Teichm\"uller space, as an embedding of the space of projective measured laminations on the surface.

In the last couple of decades, Thurston's metric has been adapted to many different settings. We list here some of these settings, to give an indication of some of the  developments: Teichmüller spaces of surfaces with boundary, see e.g. the recent papers  \cite{HP} and  \cite{AD};  Teichmüller space of the torus, see \cite{BPT} and \cite{OMP};  Randers--Teichm\"uller metrics \cite{Randers};
 singular flat metrics underlying a fixed quadrangulation \cite{SPQ}; higher dimensional hyperbolic manifolds \cite{GK};  higher Teichm\"uller theory, see \cite{Tholozan}, and there are others. This list is necessarily very partial, and the work and the literature on this topic is growing at a fast rate.
 See also the list of problems  in \cite{Su}, and \cite{PS} and \cite{Xu} for two recent surveys on Thurston's metric and its developments. 

Recently, \.{I}smail  Sa\u{g}lam and the third author of the present paper developed the theory of Thurston's Lipschitz metric on the Teichmüller space of Euclidean triangles, that is, the moduli space of marked Euclidean triangles (see  \cite{SP}). They proved in particular that restricted to the space of acute triangles, the Lipschitz distance between two marked triangles is equivalent to another distance, defined as the logarithm of the maximum of the lengths of the three edges and the three altitudes of the two triangles.  This new formula   is an analogue of Thurston's alternative description of his distance in terms of  ratios of lengths of simple closed geodesics (see  \cite[p. 4]{thurston}). In \cite{SP}, the authors  also proved that
	for $A>0$, the metric induced on the space of acute triangles having fixed area $A$ is Finsler. They gave a
	 characterisation of geodesics  in $\frak{AT}_A$:  a path is geodesic if and only if the angle at each labelled vertex of triangles in this family varies monotonically. Finally, they proved that the
	 isometry group of $\frak{AT}_A$ is isomorphic to $S_3$, the symmetric group on $\{1,2,3\}$.

The aim of the present paper is to continue studying Thurston's metric on the Teichm\"{u}ller space of Euclidean triangles inaugurated in  \cite{SP}.
Following \cite{SP}, for $A>0$, we let $\mathfrak{T}_A$ and $\mathfrak{AT}_A$ denote respectively the Teichm\"uller spaces of Euclidean triangles and acute triangles of fixed area $A$ (see \S\ref{sec:Teichmuller_space_triangles}).
 Unlike the metric described by Thurston on the Teichm\"uller space of hyperbolic surfaces, the Lipschitz  metric on $\mathfrak{AT}_A$ is symmetric as shown in \cite{SP}. In the present paper, developing the theory of  Thurston's metric on  the Teichm\"uller spaces of triangles further, we obtain the following:
\begin{itemize}
\item
Explicit formulae for the Lipschitz metric and its associated Finsler infinitesimal structure on $\mathfrak{AT}_A$,
after identifying the space $\mathfrak{T}_A$ with the upper-half plane $\mathbb{H}$ in the complex plane $\mathbb{C}$. 

\item A description of the Finsler ball at a point in the tangent space of $\mathfrak{AT}_A$ using the new parameters, and an infinitesimal rigidity result, saying that the infinitesimal unit ball at a give point determines the  triangle as a point in Teichm\"uller space.
\item
A characterisation of the stretch locus for an extremal Lipschitz map between acute triangles, and an analogue of Teichm\"uller's result (respectively Thurston's result) on the existence of foliations (respectively laminations) on which  the Lipschitz constant for the extremal Lipschitz map is optimal. 

\item The perhaps surprising result that, contrary to the usual case of Thurston's metric, there are instances where there are no extremal Lipschitz homeomorphisms between two triangles.
\end{itemize} 

The present setting of the Teichm\"uller space of the triangle is a simple case where   Thurston's ideas on his Lipschitz metric have simple analogues and can be described in an elementary way, without heavy use of Thurston's theory of surfaces.

%\section{Notation}

\section{Teichm\"uller space of triangles}
\label{sec:Teichmuller_space_triangles}

\subsection{labelled triangles}
 We identify the Euclidean plane with the complex plane $\mathbb{C}$ in the canonical way. Consider a triangle in the Euclidean plane. We label its vertices by an ordered triple so that this labelling induces a counter-clockwise orientation on the boundary of the triangle. We call such a triangle \emph{labelled}. Consider a $2$-simplex $\sigma_0$ with vertices labelled as $a$, $b$, $c$, fixed once and for all. The labelled triangle is thought of as a pair $(T,f)$ of a triangle $T$ equipped with a vertex-preserving homeomorphism $f\colon \sigma_0\to T$, called the \emph{marking} of the triangle. In this setting, for $v\in \{a,b,c\}$, the vertex of $T$ corresponding to the vertex labelled as $v$ of $\sigma_0$ via the marking is called the \emph{$v$-vertex} of $T$.

Henceforth, for a labelled triangle $T$ and for $v\in \{a,b,c\}$, we  use the notation $z_v(T)\in \mathbb{C}$ for the $v$-vertex of $T$. Let $\theta_v(T)$ be the angle of $T$ at the $v$-vertex. We denote by $e_v(T)$ the edge of the triangle which is opposite to the $v$-vertex, and by $|e_v(T)|$ its Euclidean length.

\subsection{The space of triangles}
\label{subsec:the_space_of_triangle}
We say that two labelled triangles $T$ and $T'$ are \emph{(Teichm\"uller)-equivalent} if there is a label-preserving isometry (in terms of the Euclidean metric) from $T$ onto $T'$. 

Let $\mathfrak{T}_A$ be the set of Teichm\"uler equivalence classes of triangles of area $A$.
Let $\mathfrak{AT}_A$ be the subset of $\mathfrak{T}_A$ consisting of acute triangles. For $v\in \{a,b,c\}$, we denote by $\mathfrak{OT}_v$, or, more simply,  $\mathfrak{O}_v$, the subset of $\mathfrak{T}_A$ consisting of obtuse triangles with obtuse angle at the $v$-vertex.

\begin{convention}
From now on, to simplify notation, we omit the subscript $A$ (which concerns the area of triangles) from the symbols in the case of unit area triangles (i.e. $A=1$). For instance, we set $\mathfrak{T}=\mathfrak{T}_1$. We also simply write $T$ to denote the Teichm\"uller equivalence class of a triangle $T$. 
\end{convention}

\subsection{Realisation of the space of triangles} In \S 6 of the paper \cite{SP}, the authors suggested an identification between the space  $\frak{AT}_{\frac{1}{2}}$ of acute triangles of area $\frac{1}{2}$ and an ideal triangle in the hyperbolic plane. We shall use this identification.

The space $\mathfrak{T}_A$ is naturally identified with the upper-half-plane $\mathbb{H}\subset \mathbb{C}$, by taking $z\in \mathbb{H}$ to the equivalence class of a labelled triangle $T_z$ with 
\begin{equation}
\label{eq:normalised_parameter_of_triangle}
z_a(T_z)=(2A/y)^{1/2}z, z_b(T_z)=0, z_c(T)=(2A/y)^{1/2}
\end{equation}
with area $A$ in $\mathbb{C}$, where $z=x+iy$. 
We denote by $z$ the inverse of this map, which takes each point in $\mathfrak{T}_A$ to a number in $\mathbb{H}$.
For $T\in \mathfrak{T}_A$ with $z=z(T)$, we call $T_z$ the \emph{normalised position} of $T$. Under this identification, the space $\mathfrak{AT}_A$ of acute triangles is realised as the interior of the ideal (hyperbolic) triangle $\mathfrak{A}$ with vertices $0$, $1$, $\infty$. 
The boundary $\partial \mathfrak{A}$ consists of right triangles. 
For instance, the hyperbolic geodesic connecting $0$ and $1$ parametrises the right triangles of area $A$ whose right angles are at the vertex labelled $a$. Let $\mathfrak{r}_a$ (resp. $\mathfrak{r}_b$, $\mathfrak{r}_c$) denote the complete hyperbolic geodesic connecting $0$ and $1$ (resp. $\infty$ and $0$, $\infty$ and $1$).
The complement of the closure of the ideal triangle with vertices $0, 1$ and $\infty$ consists of obtuse triangles. For a vertex $v\in \{a,b,c\}$, we denote by $\mathfrak{O}_v$ the domain enclosed by $\mathfrak{r}_x$ and $\partial \mathbb{H}$ (see Figure \ref{fig:H_teichmuller}).
\begin{figure}
\includegraphics[height = 5cm]{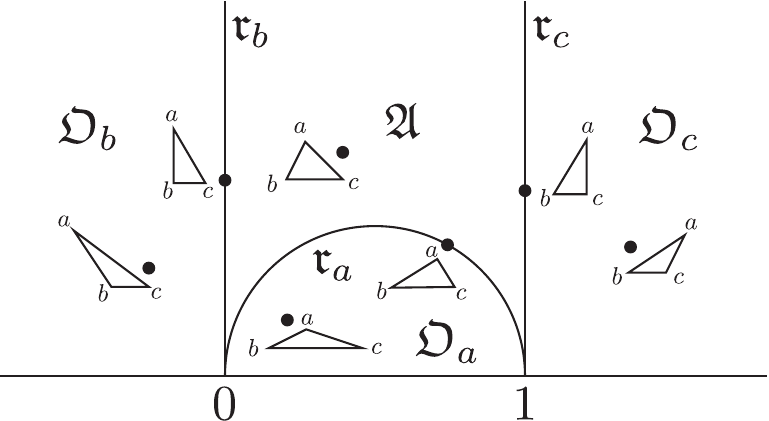}
\caption{The space of triangles $\mathfrak{T}_A$. The hyperbolic ideal triangle $\mathfrak{A}$ with vertice $\{0,1,\infty\}$ is identified with the space $\mathfrak{AT}_A$ of acute triangles. For $v\in \{a,b,c\}$, $\mathfrak{r}_v$ consists of right triangles whose angle is $\pi/2$ at the $v$-vertex, and $\mathfrak{O}_v$ consists of obtuse triangles which have an obtuse angle at the vertex labelled $v$.}
\label{fig:H_teichmuller}
\end{figure}

For $v\in \{a,b,c\}$, we set $\mathfrak{T}_A(v)=\mathfrak{AT}_A\cup \mathfrak{r}_v\cup \mathfrak{OT}_v$.

\subsection{Lipschitz metric}
Consider two labelled triangles $T$ and $T'$ on the plane, and let $f\colon T\to T'$ be a label-preserving continuous map. The Lipschitz constant of $f$ is defined as
$$
L(f)=\sup_{x,y\in T,x\ne y}\dfrac{d_{euc}(f(x),f(y))}{d_{euc}(x,y)}
$$
where $d_{euc}$ denotes the Euclidean metric of the plane. A map $f$ is said to be \emph{Lipschitz} if $L(f)<\infty$. A  label-preserving map $f\colon T\to T'$ is said to be \emph{edge-preserving} if for $v\in \{a,b,c\}$, $f(e_v(T))\subset e_v(T')$. Note that a label-preserving homeomorphism is always edge-preserving.

We define $L(T,T')$ by 
$$
L(T,T'):=\inf\{L(f)\mid \mbox{$f$ is a label-preserving homeomorphism between $T$ and $T'$}\},
$$
and set
$$
d_L(T,T')=\log L(T,T').
$$
A label-preserving and edge-preserving Lipschitz map $g\colon T\to T'$ is said to be \emph{extremal} if $L(T,T')=L(g)$. Notice that an extremal Lipschitz map is not necessarily a homeomorphism.

One can easily see that $d_L$ is a distance function on $\mathfrak{T}_A$, cf. \cite{SP}. We call $d_L$ the \emph{Lipschitz metric} on $\mathfrak{T}_A$. We give a topology the set $\mathfrak{T}_A$ (and hence $\mathfrak{AT}_A$) induced from the Lipschitz metric.  
%We can check that the action of $\mathfrak{S}_3$ on $\mathfrak{T}_A$ is isometric. Conversely, In \cite{SP}, Sa\u{g}lam and Papadopoulos show that the isometry group coincides with the action of $\mathfrak{S}_3$.

\begin{convention}
A similarity of the complex plane with factor $\sqrt{A}$ gives a natural isometric identification (with respect to the Lipschitz metric) between $\mathfrak{T}$ and $\mathfrak{T}_A$. For this reason, henceforth, we shall only work with the space of of triangles of unit area.
\end{convention}

\subsection{Parametrisation}
For a labelled triangle $T$ of unit area, the corresponding parameter $z=z(T)$ is expressed by
\begin{equation}
\label{eq:parameter_z}
z(T)=\dfrac{|e_c(T)|}{|e_a(T)|}e^{i\theta_b(T)},\quad
\cos\theta_b(T)=\dfrac{|e_a(T)|^2+|e_c(T)|^2-|e_b(T)|^2}{2|e_a(T)||e_c(T)|}.
\end{equation}
Notice that $T=(a,b,c)$ is  similar to the triangle with (ordered) vertices $(z(T),0,1)$ by a label-preserving similarity map. 
A label-preserving affine map from $T_1=T_{z_1}$ to $T_2=T_{z_2}$ is given by
$$
A_{T_1;T_2}(\zeta)=\sqrt{\dfrac{{\rm Im}(z_1)}{{\rm Im}(z_2)}}
\left(\dfrac{z_2-\overline{z_1}}{z_1-\overline{z_1}}\zeta+\dfrac{z_1-z_2}{z_1-\overline{z_1}}\overline{\zeta}\right).
$$
Its Lipschitz constant, which we denote by $L_{Af}(T_1,T_2)$, is expressed as
\begin{equation}
\label{eq:Lipschitz_constant_affine_map}
L_{Af}(T_1,T_2)=\sqrt{\dfrac{{\rm Im}(z_1)}{{\rm Im}(z_2)}}
\left|
\dfrac{z_2-\overline{z_1}}{z_1-\overline{z_1}}
\right|
+
\left|
\dfrac{z_1-z_2}{z_1-\overline{z_1}}
\right|
=\dfrac{|z_2-\overline{z_1}|+|z_2-z_1|}{2{\rm Im}(z_1)^{1/2}{\rm Im}(z_2)^{1/2}}
\end{equation}
(cf. \cite[\S2]{Saglam1}).
 We note the following.

\begin{proposition}
The parametrisation $z \colon \mathfrak{T}\to \mathbb{H}$ is a homeomorphism.
\end{proposition}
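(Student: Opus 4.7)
The plan is to establish bijectivity, which is essentially built into the normalisation, and then continuity in each direction; the delicate step is that the Lipschitz topology recovers the parameter in $\mathbb{H}$. Bijectivity itself is immediate from (\ref{eq:normalised_parameter_of_triangle}): the rule $z\mapsto[T_z]$ provides an inverse which is surjective by construction, and injective since two normalised triangles sharing the same parameter $z$ coincide as labelled triangles.

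Continuity of $z^{-1}$ should follow quickly from (\ref{eq:Lipschitz_constant_affine_map}). Given $z_0\in\mathbb{H}$ and a sequence $z_n\to z_0$ in $\mathbb{H}$, the affine map $A_{T_{z_0};T_{z_n}}$ is a label-preserving homeomorphism of $T_{z_0}$ onto $T_{z_n}$, so
$$
L(T_{z_0},T_{z_n})\le L_{Af}(T_{z_0},T_{z_n}),
$$
and the right-hand side, read off (\ref{eq:Lipschitz_constant_affine_map}), is jointly continuous in $(z_0,z_n)$ with value $1$ on the diagonal. Hence $d_L(T_{z_0},T_{z_n})\to 0$, which is the required continuity.

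Continuity of $z$ itself is the step I expect to be the main obstacle. The plan is to base it on an elementary lower bound for $L$ in terms of edge-length ratios. Because a label-preserving homeomorphism $f\colon T\to T'$ is automatically edge-preserving, its restriction to each edge $e_v(T)$ is a homeomorphism onto $e_v(T')$, and hence has Lipschitz constant at least $|e_v(T')|/|e_v(T)|$. Taking the infimum over all such $f$ yields
$$
L(T,T')\ge\max_{v\in\{a,b,c\}}\frac{|e_v(T')|}{|e_v(T)|}.
$$
Applying this estimate to the pairs $(T,T_n)$ and $(T_n,T)$, and using that $d_L$ is a symmetric distance so that $d_L(T_n,T)\to 0$ forces both $L(T_n,T)$ and $L(T,T_n)$ to tend to $1$, I obtain $|e_v(T_n)|\to |e_v(T)|$ for every $v\in\{a,b,c\}$. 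The formulae in (\ref{eq:parameter_z}) together with the cosine rule then turn this into convergence of both $\theta_b(T_n)$ and of $|e_c(T_n)|/|e_a(T_n)|$, giving $z(T_n)\to z(T)$ as required.
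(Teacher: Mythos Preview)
Your proposal is correct and follows essentially the same approach as the paper: the edge-length ratio lower bound on $L$ combined with \eqref{eq:parameter_z} for continuity of $z$, and the affine-map upper bound \eqref{eq:Lipschitz_constant_affine_map} for continuity of $z^{-1}$. Your version is slightly more explicit in using both directions of the distance to force two-sided edge-length convergence, but the argument is otherwise the same.
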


\begin{proof}
We only need to show the bi-continuity. 
Let $T_1$ and $T_2$ be points in $\mathfrak{T}$. By the definition of the Lipschitz metric, we have 
$$
L(T_1,T_2)\ge \left|\max\left\{
\dfrac{|e_a(T_1)|}{|e_{a}(T_2)|}, \dfrac{|e_b(T_1)|}{|e_{b}(T_2)|},\dfrac{|e_c(T_1)|}{|e_{c}(T_2)|} \right\}\right|.
$$
Hence, when a sequence  $\{T_n\} \subset \mathfrak{T}$ converges to $T$ in the Lipschitz metric $L$, the lengths of the edges also converge. By \eqref{eq:parameter_z},  this implies that $\{z(T_n)\}$ converges to $z(T)$ in $\mathbb{H}$.
The continuity of the inverse map $z^{-1}$ follows from \eqref{eq:Lipschitz_constant_affine_map} and the relation $\log L(T_1,T_2)\le \log L_{Af}(T_1,T_2)$ for any pair of triangles $T_1$ and $T_2$.
\end{proof}

\subsection{Pencils and Backward pencils}
For $\xi,\eta\in \partial \mathbb{H}$ with $\xi\ne \eta$, we denote by $\mathfrak{r}= \mathfrak{r}(\xi,\eta)$ the complete hyperbolic geodesic in $\mathbb{H}$ joining them. For $d>0$, we denote by $N(\mathfrak{r},d)$ the $d$-neighborhood of the geodesic $\mathfrak{r}$. 
Let $T$ be a labelled triangle.
%Definition of \mathfrak{T}(v) somewhere
Suppose that  $z(T)$ lies in $\mathfrak{T}(v)$ for $v\in \{a,b,c\}$. For the vertex $v$ of $T$, we define the \emph{$v$-pencil} $P(T;v)$ by
$$
\pencil(T;v):=\overline{N(\mathfrak{r}_{v'},d_{v'})\cap N(\mathfrak{r}_{v''},d_{v''})
\cap (\mathfrak{A}\cup \mathfrak{O}_v)}
$$
with $\{v,v',v''\}=\{a,b,c\}$,
where
$
\displaystyle d_w=-\log\tan(\theta_w(T)/2)
%=\begin{cases}
%-\log \cos\alpha & (x=a) \\
%-\log \cos\beta & (x=b) \\
%-\log \cos\gamma & (x=c)
%\end{cases}
$ for $w \in \{v', v''\}$, (cf. Figure \ref{fig:pencil}).

Given  an acute triangle $T$, we define the \emph{$v$-backward pencil} $\bpencil(T;v)$ as
$$
\bpencil(T;v)=\overline{(\mathbb{C}-N(\mathfrak{r}_{v'},d_{v'}))\cap (\mathbb{C}-N(\mathfrak{r}_{v''},d_{v''}))
\cap \mathfrak{A}}
$$
(see Figure \ref{fig:bpencil}).
\begin{figure}
\includegraphics[height = 4cm]{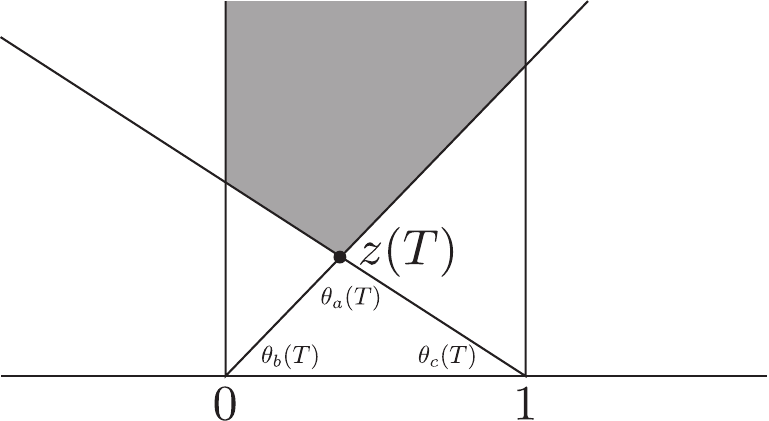}
\caption{The shaded region represents tha $a$-pencil $\pencil(T;a)$ for $z(T)\in \overline{\mathfrak{A}}\cup \mathfrak{O}_a$.}
\label{fig:pencil}
\end{figure}
\begin{figure}
\includegraphics[height = 4cm]{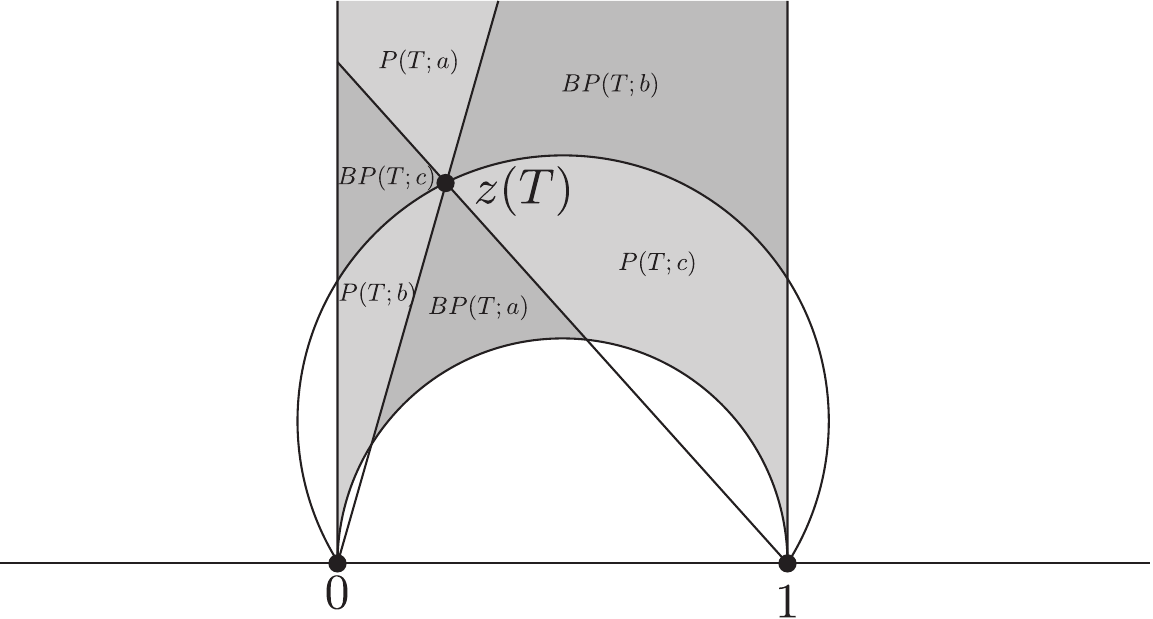}
\caption{$\pencil(T;v)$ and $\bpencil(T;v)$ for $z(T)\in \mathfrak{A}$ and $v\in \{a,b,c\}$.}
\label{fig:bpencil}
\end{figure}
After identifying $\mathfrak{T}$ with $\mathbb{H}$, 
$\pencil(T;v)$ and $\bpencil(T;v)$ are also regarded as subsets of $\mathfrak{T}$ for $v\in \{a,b,c\}$. 

\subsection{Symmetries}
\label{subsec:symmetries}
The symmetric group $\mathfrak{S}_3$ of degree $3$ acts on $\mathfrak{T}$ naturally as permutations of the labelled vertices. This symmetric group is thought of as the mapping class group of a triangle (see \S 6 of the paper \cite{SP}). Under the identification between $\mathfrak{T}$ and $\mathbb{H}$, the action of $\mathfrak{S}_3$ on $\mathbb{H}$ is generated by two transformations
\begin{equation}
\label{eq:change_labels}
\omega_{ab}(z)=\dfrac{\overline{z}}{\overline{z}-1},
\quad
\omega_{ac}(z)=\dfrac{1}{\overline{z}},
\end{equation}
where the former permutes the $a$ and $b$-vertices, and the latter permutes the $a$ and $c$-vertices. 
For $z_0\in \mathbb{H}\cong \mathfrak{T}$, the above actions are induced by the (orientation-reversing) congruences
\begin{align*}
R_{ab}\colon & T_{z_0}\ni \zeta \mapsto 
-\dfrac{|z_0-1|}{\overline{z_0}-1}
\left(
\overline{\zeta}-
\sqrt{\dfrac{2}{{\rm Im}(z_0)}}
\right)
+
\dfrac{|z_0-1|}{\sqrt{2{\rm Im}(z_0)}}
 \in T_{\omega_{ab}(z_0)} \\
R_{ac}\colon &T_{z_0}\ni \zeta \mapsto 
\dfrac{|z_0|}{\overline{z_0}}\overline{\zeta} \in T_{\omega_{ac}(z_0)}.
\end{align*}
For the record, we note that the transformation of $\mathfrak{T}$ corresponding to the permutation between the $b$ and $c$-vertices is expressed as
$$
\omega_{bc}(z)=1-\overline{z},
$$
and it is induced by
the congruence
$$
R_{bc}(\zeta)=\sqrt{\dfrac{2}{{\rm Im}(z_0)}}-\overline{\zeta}.
$$

\section{The Lipschitz distance and its Finsler structure}
In this section, we discuss the Lipschitz metric on the Teichm\"uller space of acute triangles.
%\subsection{The space $\mathfrak{AT}$ of acute triangles}
The goal of this section is to prove the following theorem, which follows from \Cref{prop:extremal_Lip1} and \Cref{prop:Lip_constant_backward_pencils} given later.
 
\begin{theorem}[An expression of the Lipschitz distance]
\label{thm:main}
Let $T$ be a point in $\mathfrak{A}$, and $T$ a point in  $\overline{\mathfrak{A}}$.
Set $z=z(T)\in \mathbb{H}$,  and suppose that $w=z(T')$ lies in $P(T;x)\cup BP(T;x)$ for $x\in \{a,b,c\}$. 
Then, we have
$$
d_L(T,T')=\begin{cases}
\dfrac{1}{2}\left|\log \dfrac{{\rm Im}(w)}{{\rm Im}(z)}\right| & (\mbox{if $x=a$}) \\
\dfrac{1}{2}\left|\log \dfrac{{\rm Im}(w)}{|w|^2}\dfrac{|z|^2}{{\rm Im}(z)}\right|& (\mbox{if $x=b$}) \\
\dfrac{1}{2}\left|\log \dfrac{{\rm Im}(w)}{|w-1|^2}\dfrac{|z-1|^2}{{\rm Im}(z)}\right| & (\mbox{if $x=c$}).
\end{cases}
$$
\end{theorem}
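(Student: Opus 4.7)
The plan is to reduce the theorem to a single case via the $\mathfrak{S}_3$-symmetry of $(\mathfrak{T}, d_L)$ and then establish matching upper and lower bounds. Since the congruences $R_{ab}, R_{ac}, R_{bc}$ are Euclidean isometries of marked triangles, the induced Möbius-like maps $\omega_{ab}, \omega_{ac}, \omega_{bc}$ on $\mathbb{H}$ act as $d_L$-isometries of $\mathfrak{T}$. I would first verify that these maps permute the pencils and backward pencils in a manner consistent with the case split, and that the three candidate expressions on the right-hand side are exchanged under the induced transformations. For instance, using the identities $\mathrm{Im}(\omega_{ab}(z)) = \mathrm{Im}(z)/|z-1|^2$ and $|\omega_{ab}(z)-1|^2 = 1/|z-1|^2$, one can explicitly track how each formula transforms under $\omega_{ab}$, and similarly for $\omega_{ac}$ using $\mathrm{Im}(1/\bar z) = \mathrm{Im}(z)/|z|^2$. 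This reduces the problem to a single representative case, which I take to be $x = a$.

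For the lower bound in the $x = a$ case, I would exploit the normalisation \eqref{eq:normalised_parameter_of_triangle}. The triangle $T_z$ has $|e_a(T)| = \sqrt{2/\mathrm{Im}(z)}$ and altitude $h_a(T) = \sqrt{2\,\mathrm{Im}(z)}$ from the $a$-vertex, with analogous expressions for $T'$. For any label-preserving homeomorphism $f \colon T \to T'$, edge-preservation gives $L(f) \geq |e_a(T')|/|e_a(T)| = (\mathrm{Im}(z)/\mathrm{Im}(w))^{1/2}$, while the image of the altitude from the $a$-vertex in $T$ is a curve in $T'$ from the $a$-vertex of $T'$ to $e_a(T')$, hence of length at least $h_a(T')$, yielding $L(f) \geq h_a(T')/h_a(T) = (\mathrm{Im}(w)/\mathrm{Im}(z))^{1/2}$. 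Combining these two inequalities gives $d_L(T,T') \geq \tfrac{1}{2}\bigl|\log(\mathrm{Im}(w)/\mathrm{Im}(z))\bigr|$, matching the claimed expression.

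The upper bound is the substantive content, and is precisely what the two deferred propositions are designed to supply. For $w \in P(T;a)$ one would construct an explicit edge- and label-preserving Lipschitz map $g \colon T \to T'$ whose Lipschitz constant saturates the lower bound, for example by stretching along the altitude from the $a$-vertex by the factor $(\mathrm{Im}(w)/\mathrm{Im}(z))^{1/2}$ and interpolating along the base edge $e_a$. The pencil hypothesis is precisely the geometric condition forcing the interpolation to be non-expansive, so that $L(g)$ equals the vertical stretch factor; approximating $g$ by homeomorphisms would then give $L(T,T') \leq L(g)$. The case $w \in BP(T;a)$ is handled symmetrically with the roles of stretching and compression interchanged, using the edge lower bound rather than the altitude bound. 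The main obstacle will be constructing the optimal extremal map explicitly and recognising that the algebraic description of $P(T;a) \cup BP(T;a)$ is exactly the region in which the candidate map realises the Lipschitz bound; the symmetry reduction and the lower-bound computation are comparatively routine.
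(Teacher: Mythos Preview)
Your proposal is correct and follows essentially the same route as the paper. The paper likewise reduces to the case $x=a$ via the congruences of \S\ref{subsec:symmetries}, obtains the lower bound by comparing altitudes (for the pencil case, in the proof of \Cref{prop:extremal_Lip1}) and edge lengths (for the backward-pencil case, in the proof of \Cref{prop:Lip_constant_backward_pencils}), and supplies the matching upper bound by constructing explicit extremal Lipschitz maps in those two propositions; your sketch of the stretch map and the approximation-by-homeomorphisms step also appears there (see the proof of \Cref{coro:distance}).
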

We shall show \Cref{thm:main} by giving extremal maps concretely for each case.

%For $z\in \mathbb{C}$ with $z\ne 0$, we denote by $\Arg(z)$ the argument of $z$ with $-\pi <\Arg(z)\le \pi$.
\subsection*{Finsler structure}
In \cite{SP}, Sa\u{g}lam and Papadopoulos gave an expression of the Finsler structure of the deformation space of acute triangles using the side-length parametrisation. In this section, we shall give a new description of the Finsler structure which arises naturally from our setting, using a direct calculation. It is often interesting to have a formula for the norm of a vector and to draw the unit ball in the tangent space, once we know that a geometrically defined distance function arises from a Finsler structure. This will imply in particular an infinitesimal rigidity result which is similar to the one obtained in \cite{Pan,HOP}.

Let $z_0$ be a point in $\mathfrak{A}$.
Let $\theta_1$ and $\theta_2$ be the arguments of $z_0$ and $z_0-1$ respectively, with $-\pi< \theta_1$, $\theta_2\le \pi$. 
We consider six regions in the tangent space $T_{z_0}\mathfrak{T}=\mathbb C$ divided by three lines passing through the origin with angles $\theta_1$, $\theta_2$, and $\theta_1+\theta_2-\pi$. We denote these regions by $S_x(z_0)$, $S^B_{x}(z_0)$ ($x\in \{a,b,c\}$),  as in Figure \ref{fig:tangent_space}.
\begin{figure}
\includegraphics[height = 7cm]{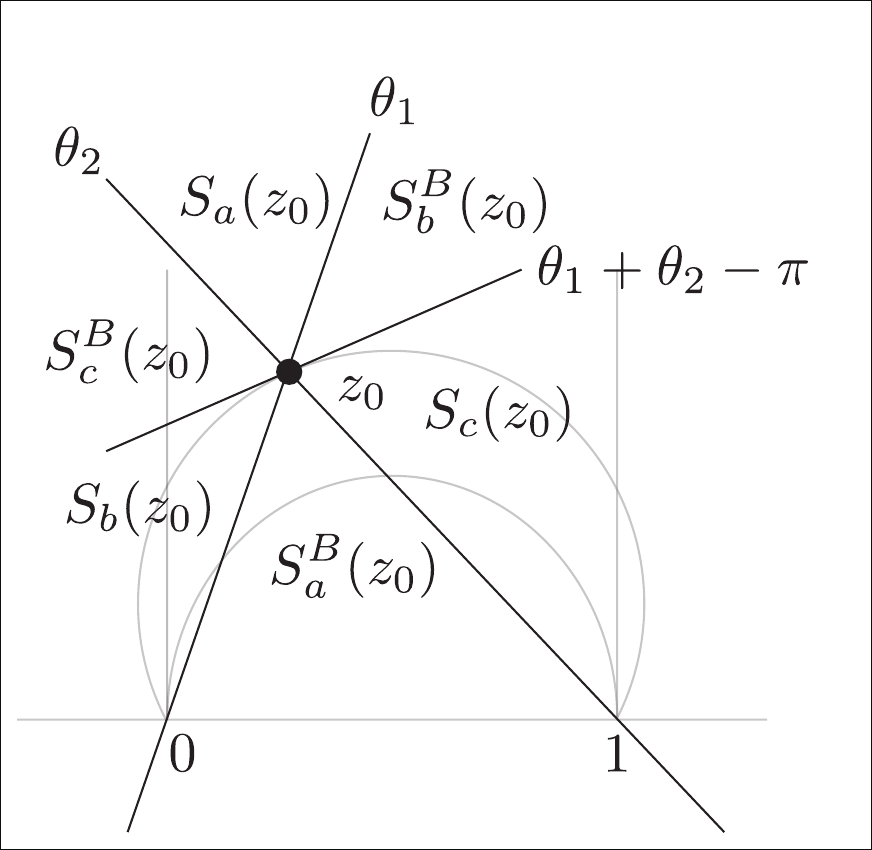}
\label{fig:tangent_space}
\caption{The decomposition of the tangent space at $z_0$ into six sectors}
\end{figure}
We define the norm $F^{\mathfrak{A}}(z_0,v)$ of a vector $v\in T_{z_0}\mathfrak{T} = \mathbb C$ by
$$
F^{\mathfrak{A}}(z_0,v)=
\begin{cases}
\dfrac{1}{2}\dfrac{|{\rm Im}(v)|}{{\rm Im}(z_0)} 
& (v\in S_a(z_0)\cup S^B_a(z_0))
\\[2mm]
\dfrac{1}{2}
\left|\dfrac{{\rm Im}(v)}{{\rm Im}(z_0)}
-2{\rm Re}\left(\dfrac{v}{z_0}\right)
\right|& (v\in S_b(z_0)\cup S^B_b(z_0))
\\[3mm]
\dfrac{1}{2}\left|\dfrac{{\rm Im}(v)}{{\rm Im}(z_0)}
-2{\rm Re}\left(\dfrac{v}{z_0-1}\right)
\right|& (v\in S_c(z_0)\cup S^B_c(z_0)).
\end{cases}
$$
One can check easily that $F^{\mathfrak{A}}$ is continuous on $\mathfrak{A}\times \mathbb{C}$. The Finsler infinitesimal norm is obtained by differentiating the distance in \Cref{thm:main}. Thus, we we have the following.
\begin{theorem}[Finsler structure]
\label{thm:finsler_metric}
The Lipschitz distance $d_L$ on $\mathfrak{A}$ is a Finsler distance with Finsler norm $F^\mathfrak{A}$.
\end{theorem}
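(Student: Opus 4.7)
Since the Lipschitz distance $d_L$ on $\mathfrak A$ is already known to be Finslerian (see \cite{SP}), its Finsler norm at $z_0 \in \mathfrak A$ in direction $v \in T_{z_0}\mathfrak A = \mathbb C$ must agree with
\[
\lim_{t\to 0^+}\frac{d_L(T_{z_0},T_{z_0+tv})}{t}.
\]
The plan is therefore to compute this one-sided directional derivative from the piecewise formula of \Cref{thm:main} and to check that it reproduces $F^{\mathfrak A}(z_0,v)$ on each of the six sectors.

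\textbf{Matching sectors to pencils.} I first need to identify which sector of $T_{z_0}\mathfrak A$ contains $v$ with which pencil contains $z_0+tv$ for small $t>0$. Each pencil $\pencil(T_{z_0};x)$ is bounded by pieces of $\partial N(\mathfrak r_w,d_w)$ passing through $z_0$, and by differentiating the defining equation of $\partial N(\mathfrak r_w,d_w)$, using $d_w = -\log\tan(\theta_w(T_{z_0})/2)$, one shows that the Euclidean tangent directions of these boundaries at $z_0$ are precisely the three lines of arguments $\theta_1$, $\theta_2$, $\theta_1+\theta_2-\pi$. Consequently, whenever $v \in S_x(z_0) \cup S^B_x(z_0)$, the point $z_0+tv$ lies in $\pencil(T_{z_0};x) \cup \bpencil(T_{z_0};x)$ for all sufficiently small $t>0$, and the corresponding branch of \Cref{thm:main} applies.

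\textbf{Differentiations.} The three branches reduce to short derivatives. For $v \in S_a \cup S^B_a$, \Cref{thm:main} yields $d_L(T_{z_0}, T_{z_0+tv}) = \tfrac12|\log(\mathrm{Im}(z_0+tv)/\mathrm{Im}(z_0))|$, whose right derivative at $t=0$ is $\tfrac12|\mathrm{Im}(v)|/\mathrm{Im}(z_0)$. For $v \in S_b \cup S^B_b$, one uses
\[
\frac{d}{dt}\log|z_0+tv|^2\Big|_{t=0} = 2\,\mathrm{Re}(v/z_0)
\]
to obtain $\tfrac12\,|\mathrm{Im}(v)/\mathrm{Im}(z_0)-2\,\mathrm{Re}(v/z_0)|$; the $S_c \cup S^B_c$ branch is the same calculation with $z_0$ replaced by $z_0-1$. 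In each case the result is exactly the corresponding piece of $F^{\mathfrak A}(z_0,v)$. It then remains to verify continuity on the boundary lines between adjacent sectors; for example, on $\arg v = \theta_1$ one has $\mathrm{Im}(v)/\mathrm{Im}(z_0) = 2\,\mathrm{Re}(v/z_0)$, so the $S_a$- and $S_b$-expressions coincide. This compatibility is automatic once the formulas are recognised as directional derivatives of the same function $d_L$.

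\textbf{Main obstacle.} The only non-routine step is the sector-pencil matching: one must show that the tangent lines at $z_0$ to the boundary geodesics of the $d_w$-neighbourhoods are exactly the three lines of angles $\theta_1$, $\theta_2$, $\theta_1+\theta_2-\pi$ defining the sectors $S_x(z_0)$ and $S^B_x(z_0)$. Once this geometric fact is established, the remainder of the proof consists of three log-derivatives and a boundary compatibility check.
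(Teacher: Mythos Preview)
Your proposal is correct and follows essentially the same route as the paper: both proofs compute the one-sided directional derivative of the explicit distance formula of \Cref{thm:main} along each sector and identify it with the claimed piecewise expression for $F^{\mathfrak A}$. The only difference is one of emphasis: you isolate the sector--pencil matching as the key geometric step and keep the absolute values throughout, whereas the paper leaves that matching implicit (writing simply ``for a differentiable path in $P(T_{z_0};a)$ tangent to $v$'') and instead spends its effort verifying the sign of the expression inside the absolute value via the angular bounds on $\theta$ coming from the sector description.
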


\begin{proof}
For any $v\in \mathbb{C}$, we have
\begin{align*}
\dfrac{|{\rm Im}(z_0+tv)|}{{\rm Im}(z_0)} 
&=
1+t\dfrac{{\rm Im}(v)}{{\rm Im}(z_0)} 
\\
\dfrac{{\rm Im}(z_0+tv)}{|z_0+tv|^2}\dfrac{|z_0|^2}{{\rm Im}(z_0)}
&=
\left(
1-2t{\rm Re}\left(\dfrac{v}{z_0}\right)
+o(t)
\right)
\left(
1+t\dfrac{{\rm Im}(v)}{{\rm Im}(z_0)}
\right)
\\
&=1+t\left(\dfrac{{\rm Im}(v)}{{\rm Im}(z_0)}
-2{\rm Re}\left(\dfrac{v}{z_0}\right)\right)+o(t)
\\
\dfrac{{\rm Im}(z_0+tv)}{|z_0+tv-1|^2}\dfrac{|z_0-1|^2}{{\rm Im}(z_0)}
&=
%\left|
%1+t\dfrac{v}{z_0-1}
%\right|^{-2}
%\left(
%1+t\dfrac{{\rm Im}(v)}{{\rm Im}(z_0)}
%\right)
%\\
%&=
\left(
1-2t{\rm Re}\left(\dfrac{v}{z_0-1}\right)
+o(t)
\right)
\left(
1+t\dfrac{{\rm Im}(v)}{{\rm Im}(z_0)}
\right)
\\
&=1+t\left(
\dfrac{{\rm Im}(v)}{{\rm Im}(z_0)}
-2{\rm Re}\left(\dfrac{v}{z_0-1}\right)
\right)+o(t)
\end{align*}
as $t\to 0$.
Suppose first that $v$ lies in $S_a(z_0)$. Since ${\rm Im}(v)\ge 0$ for $v \in S_a(z_0)$,
%if $v\in S_a(z_0)$ and ${\rm Im}(v)\ge 0$ otherwise,
 from \Cref{thm:main} and the above calculation, we have
\begin{align*}
d_L(z_0,z_t)
&=\dfrac{1}{2}\dfrac{{\rm Im}(v)}{{\rm Im}(z_0)}+o(t)
=\dfrac{1}{2}\dfrac{|{\rm Im}(v)|}{{\rm Im}(z_0)}+o(t)\quad (t\to 0)
\end{align*}
for a differentiable path in $P(T_{z_0};a)$ which is tangent to $v$ at $t=0$.
We have the same conclusion for a differentiable path in $BP(T_{z_0};a)$ which tangent to $v$ at $t=0$.  

Suppose next that $v$ lies in $S_b(z_0)$. Express $v$ as $v=v_1+iv_2=\xi e^{i\theta}$. 
As depicted in Figure \ref{fig:tangent_space}, we have $\theta_1+\theta_2 - 2\pi \leq \theta \leq \theta_1-\pi$ then, and hence 
$\theta_2-\theta_1-2\pi \le \theta-2\theta_1\le -\pi-\theta_1$.
Then we have 
\begin{align*}
{\rm Im}(v)-
2{\rm Re}(\sin\theta_1e^{-i\theta_1}v)
&=\xi\sin\theta -2\xi \sin\theta_1\cos(\theta-\theta_1)
=\xi \sin(\theta-2\theta_1)>0,
%e^{i(\theta-\theta_2)},
\end{align*}
where the last inequality follows from the fact that both $\sin(\theta_2-\theta_1)$ and $\sin(\pi-\theta_1)$ are positive and  fromthe bounds for $\theta$ given above.
Since $z_0=(e^{i\theta_1}/\sin\theta_1){\rm Im}(z_0)$, we have
\begin{align*}
\dfrac{{\rm Im}(v)}{{\rm Im}(z_0)}-2{\rm Re}\dfrac{v}{z_0}
&=
\dfrac{\xi \sin(\theta-2\theta_1)}
{{\rm Im}(z_0)}>0.
\end{align*}
Hence, from the above calculation, we get, for a differentiable path in $P(T_{z_0}; b)$ tangent to $v$ at $t=0$,
\begin{align*}
d_L(z_0,z_t)
&=t\dfrac{1}{2}\left(\dfrac{{\rm Im}(v)}{{\rm Im}(z_0)}-2{\rm Re}\dfrac{v}{z_0}\right)+o(t) \\
&=t\dfrac{1}{2}\left|\dfrac{{\rm Im}(v)}{{\rm Im}(z_0)}-2{\rm Re}\dfrac{v}{z_0}\right|+o(t).
\end{align*}

The remaining cases  for $BP(T_{z_0};b)$, $P(T_{z_0};c)$, and $PB(T_{z_0};c)$ can be dealt with by a similar argument.
\end{proof}

\subsection*{Some remarks}
Regarding \Cref{thm:main} and \Cref{thm:finsler_metric}, we remark the following.
\begin{enumerate}
\item
  \Cref{thm:finsler_metric} gives another way of seeing the non-uniqueness of geodesics between two points, a fact which was already noticed in \cite{SP}. 
Indeed, we can see that there are uncountably many geodesics between two points in $\mathfrak{A}$.
For instance, take $z_0\in \mathfrak{A}$ and $z_1\in P(T_{z_0};a)$. Then,
a piecewise $C^1$-path $z_t$ ($0\le 1\le 1$) connecting $z_0$ to $z_1$ 
whose imaginary part ${\rm Im}(z_t)$ is increasing is a geodesic between $z_0$ and $z_1$.
Similarly, when $z_1\in BP(T_{z_0};a)$, if the imaginary part is decreasing, $z_t$ is a geodesic between $z_0$ and $z_1$.
\item
For $z_0\in \mathfrak{A}$,
the unit ball associated with the Finsler norm  $F^{\mathfrak{A}}$ at $z_0$ is a hexagon with vertices $2z_0$, $2(z_0-1)$, $2z_0(z_0-1)$, $-2z_0$, $-2(z_0-1)$ and $-2z_0(z_0-1)$. The diagonals divides the unit ball into six triangles, each of which is similar to $T_{z_0}$. See Figure \ref{fig:unit_ball}.
\item
From the second remark, we deduce the following  \emph{infinitesimal rigidity} result: The isometry type  of the unit ball determines uniquely a triangle in the parameter space. 
The same kind of infinitesimal rigidity concerning cotangent spaces of Teichm\"{u}ller spaces of closed surfaces with Thurston's metric was proved in \cite{Pan,HOP}.
With the preceding notation, we can state our infinitesimal rigidity result as follows.
\end{enumerate}

\begin{corollary} Let $z_0$ and $z_1$ be points in $\mathfrak{A}$.
%, and let $f$ be a differentiable map from a neighbourhood of $z_0$ to $\mathfrak{A}$ with $f(z_0)=z_1$.
Suppose that there is a linear isometry from $(T_{z_0}\mathfrak{A},F^{\mathfrak{A}}(z_0,\cdot))$ to $(T_{z_1}\mathfrak{A},F^{\mathfrak{A}}(z_1,\cdot))$.
Then the triangles  $T_{z_1}$ and  $T_{z_0}$ are congruent.
\end{corollary}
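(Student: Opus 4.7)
The plan is to exploit the hexagonal shape of the Finsler unit ball described in the second remark after \Cref{thm:finsler_metric}. Any linear isometry $L\colon (T_{z_0}\mathfrak{A},F^{\mathfrak{A}}(z_0,\cdot))\to (T_{z_1}\mathfrak{A},F^{\mathfrak{A}}(z_1,\cdot))$ sends the unit ball at $z_0$ to the unit ball at $z_1$, and hence bijectively maps the six hexagon vertices of the source onto those of the target. Since $L$ is $\mathbb{R}$-linear it preserves the central symmetry, so it induces a bijection between the three antipodal vertex pairs $\{\pm 2z_0\}$, $\{\pm 2(z_0-1)\}$, $\{\pm 2z_0(z_0-1)\}$ at $z_0$ and the corresponding three pairs at $z_1$.

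The first step is a normalisation. The $\mathfrak{S}_3$-action of \S\ref{subsec:symmetries} consists of isometries of $(\mathfrak{T},d_L)$ taking each triangle to a congruent one, and its differentials at $z_0$ give linear isometries of the Finsler tangent spaces. A short computation using \eqref{eq:change_labels} shows that $d\omega_{ab}$, $d\omega_{ac}$, $d\omega_{bc}$ induce on the set of three vertex pairs the three transpositions of $\mathfrak{S}_3$ (for instance $d\omega_{ab}(v)=-\overline{v}/(\overline{z_0}-1)^2$ sends $\{\pm 2z_0\}$ onto $\{\pm 2z_1(z_1-1)\}$ with $z_1=\omega_{ab}(z_0)$). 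Consequently, after replacing $z_1$ by $\sigma(z_1)$ for a suitable $\sigma\in\mathfrak{S}_3$ (which yields a congruent triangle) and composing $L$ with $-I$ if needed, one may assume
\begin{equation*}
L(2z_0)=2z_1,\quad L(2(z_0-1))=2\epsilon_2(z_1-1),\quad L(2z_0(z_0-1))=2\epsilon_3 z_1(z_1-1),
\end{equation*}
for some signs $\epsilon_2,\epsilon_3\in\{\pm 1\}$.

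The second step is to apply $L$ to the real-linear identity
\begin{equation*}
2z(z-1)=-|z-1|^2\,(2z)+|z|^2\,(2(z-1)),
\end{equation*}
which holds for every $z\in\mathbb{C}$ by a direct complex-arithmetic check. Evaluating at $z_0$, applying $L$, and comparing with the analogous identity at $z_1$, one obtains
\begin{equation*}
-|z_0-1|^2\,(2z_1)+\epsilon_2|z_0|^2\,(2(z_1-1))=\epsilon_3\bigl(-|z_1-1|^2\,(2z_1)+|z_1|^2\,(2(z_1-1))\bigr).
\end{equation*}
Since $z_1\in\mathbb{H}$, the vectors $2z_1$ and $2(z_1-1)$ are $\mathbb{R}$-linearly independent, so comparing coefficients gives $\epsilon_3|z_1-1|^2=|z_0-1|^2$ and $\epsilon_3|z_1|^2=\epsilon_2|z_0|^2$. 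Positivity of both sides forces $\epsilon_2=\epsilon_3=1$ together with $|z_1|=|z_0|$ and $|z_1-1|=|z_0-1|$. As the two circles $\{|z|=|z_0|\}$ and $\{|z-1|=|z_0-1|\}$ meet in at most one point of $\mathbb{H}$, this yields $z_1=z_0$, and unwinding the normalisation we conclude that $T_{z_1}$ is congruent to $T_{z_0}$.

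The main obstacle is the bookkeeping in the normalisation step: one must verify by explicit computation that the three generators of the $\mathfrak{S}_3$-action indeed realise the three transpositions on the triple of vertex pairs of the unit hexagon. Once this is established, the remainder of the argument is elementary real-linear algebra on the identity above.
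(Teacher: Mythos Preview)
Your proof is correct. The paper itself does not give a detailed proof of this corollary: it simply points to the second remark preceding it, namely that the Finsler unit ball at $z_0$ is a hexagon whose diagonals cut it into six triangles each similar to $T_{z_0}$, and asserts that this forces infinitesimal rigidity.

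Your argument and the paper's hint are based on the same object---the hexagonal unit ball with vertices $\pm 2z_0,\pm 2(z_0-1),\pm 2z_0(z_0-1)$---but you supply the missing details. In fact your route is slightly sharper than the paper's heuristic: the observation that the six sub-triangles are \emph{similar} to $T_{z_0}$ is not, by itself, enough, because a general real-linear bijection between two hexagons need not preserve similarity classes of the constituent triangles. What \emph{is} preserved by any real-linear map is the relation expressing one diagonal pair as a real combination of the other two, and this is exactly what your identity
\[
2z(z-1)=-|z-1|^2\,(2z)+|z|^2\,(2(z-1))
\]
captures. Equivalently, one could phrase the paper's idea rigorously by noting that a linear map preserves the \emph{area ratios} of the six triangles, which are $1:|z_0|^2:|z_0-1|^2$; your coefficient comparison is the algebraic version of this.

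Your normalisation via the $\mathfrak{S}_3$-action is fine: the verification for $d\omega_{ab}$ is correct, and together with (say) $d\omega_{bc}(v)=-\bar v$, which swaps the pairs $\{\pm 2z\}\leftrightarrow\{\pm 2(z-1)\}$ and fixes $\{\pm 2z(z-1)\}$, you get two transpositions generating all of $\mathfrak{S}_3$. Since the hexagon is centrally symmetric, composing with $-I$ is a legitimate Finsler isometry, so the reduction to $L(2z_0)=2z_1$ with the remaining signs $\epsilon_2,\epsilon_3$ is justified. The positivity step then forces $\epsilon_2=\epsilon_3=1$ and $|z_0|=|z_1|$, $|z_0-1|=|z_1-1|$, hence $z_0=z_1$ in $\mathbb{H}$, and unwinding gives congruence of the original triangles.
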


\begin{figure}
\includegraphics[height = 5cm]{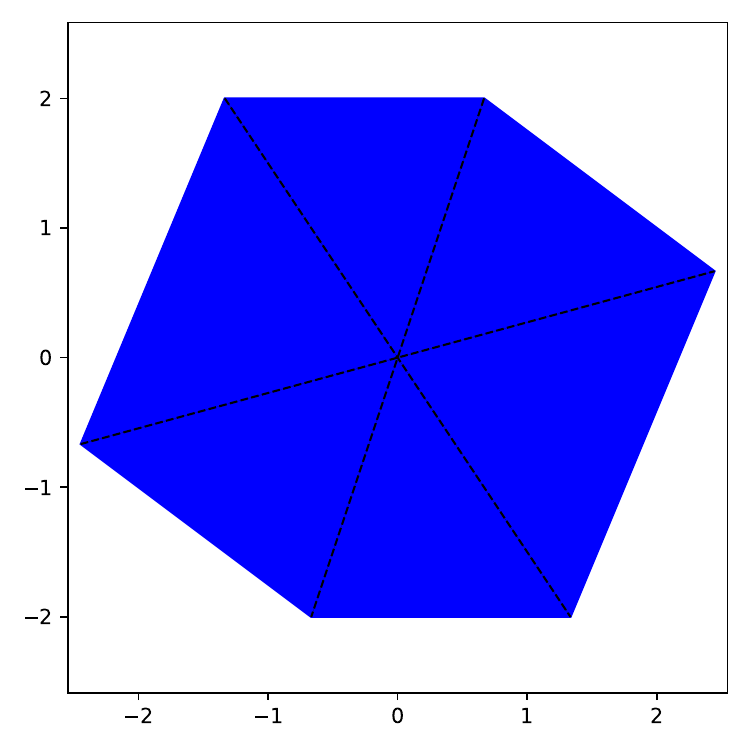}
\label{fig:unit_ball}
\caption{The shape of the Finsler unit ball at $z_0=1/3+i$. The diagonals (dashed lines) divide the hexagon into six triangles and each triangle is similar to $T_{z_0}$.}. 
\end{figure}

%Let $T$ be a triangle in $\mathbb{C}$ with vertices $a$, $b$, $c$. For the vertex $a$ of $T$, we associate  the three kinds of pictures $\mathcal{F}^b_a$, $\mathcal{F}^c_a$ and $\mathcal{G}_a$ for the vertex $a$:
\subsection{Stretching loci}
Let us fix an acute triangle $T$. 
With a vertex  $v\in \{a,b,c\}$ and an angle  $\theta \in (\theta_v(T),\pi/2)$, we associate  three kinds of pictures $\mathcal{F}^{v',\theta}_v=\mathcal{F}_{v}^{v',\theta}(T)$ and $\mathcal{G}_v=\mathcal{G}_v(T)$ for the vertex $v$, where $v,v'\in \{a,b,c\}$ are distinct vertices,  as in Figure \ref{fig:foliation3}. The figure only show the case when $v=a$,  but the other cases can be drawn just by changing the symbols of vertices.
\begin{figure}
\includegraphics[height = 4cm]{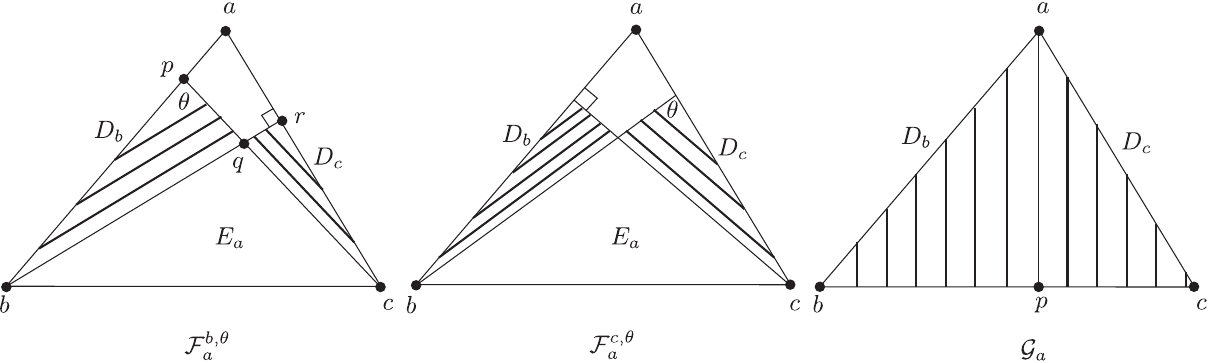}
\caption{The stretching loci $\mathcal{F}_a^{b,\theta}$, $\mathcal{F}_a^{c,\theta}$, $\mathcal{G}_a$}
\label{fig:foliation3}
\end{figure}

In the leftmost triangle of Figure \ref{fig:foliation3}, $\mathcal{F}^{b,\theta}_a$ is drawn as follows. Let $a$, $b$, and $c$ be the $a$-vertex, the $b$-vertex, and the $c$-vertex of $T$, respectively. Draw a perpendicular from $b$ to its opposite side with its foot $r$, and then a segment from $c$ to its opposite side so that the angle $\angle bpc$ is equal to $\theta$, where $p$ is the foot of the segment. 
We denote the intersection between the segments $br$ and $cp$ by $q$.
 Let $E_a$, $D_b$ and $D_c$ be the triangles $\triangle bcq$, $\triangle bpq$, and $\triangle cqr$, respectively. Consider the foliations on $D_b$ (resp. $D_c$) whose leaves are Euclidean segments parallel to the segment $rb$ (resp. the segment $pc$). We call these foliations the \emph{maximally stretched foliations}. The \emph{maximal stretching locus} $\mathcal{F}_a^{b,\theta}$  consists of the foliated triangles $D_b$ and $D_c$, and the triangle $E_a$. We call the triangle $E_a$ the \emph{expanding region}. The maximal stretching locus $\mathcal{F}^{c,\theta}_a$ for the second triangle in Figure \ref{fig:foliation3} is defined in the same way. The names will be justified below (see Remark \ref{remark:2}). 

In the rightmost triangle of Figure \ref{fig:foliation3}, its maximal stretching locus $\mathcal{G}_a$ is drawn as follows. Draw the perpendicular from $a$ to the opposite side, with its foot $p$. Divide the triangle into two right triangles $D_b=\triangle apb$ and $D_c=\triangle apc$. Consider the foliations on $D_b$ and $D_c$ consisting of Euclidean segments parallel to $ap$. The \emph{maximal stretching locus} $\mathcal{G}_a$ consists of foliated triangles $D_b$ and $D_c$. 

\subsection{Extremal Lipschitz maps associated with pencils}
%\subsection{Notation}
\begin{notation}
\label{z}
For $z\in \mathbb{H}$, we set $\tilde{T}_z$ to be the labelled triangle with vertices 
$$
z_a(\tilde{T}_z)=z, z_b(\tilde{T}_z)=0, z_c(\tilde{T}_z)=1.
$$
\end{notation}

%\subsection{Stretching map associated to $\mathcal{G}_v$ for $v\in \{a,b,c\}$}
%\subsection{Extremal Lipschitz maps associated with pencils}
In this section, we discuss an extremal Lipschitz map from a normalised triangle $T$ to a normalised triangle in the pencil $P(T;v)$ for $v\in \{a,b,c\}$. 
We use the notation \ref{z}, and shall discuss only the case when $v=a$. In the case when $v=b$ and $c$, we can define the Lipschitz maps
%$f_{\mathcal{G}_b,(k_1,k_2)}$, ${\rm St}(T,\mathcal{G}_b,(k_1,k_2))$, $f_{\mathcal{G}_c,(k_1,k_2)}$ and ${\rm St}(T,\mathcal{G}_c,(k_1,k_2))$
 in the same way, taking conjugates by congruences  (cf. \S\ref{subsec:symmetries}).

\subsubsection{}
Let $T_0$ and $T_1$ be triangles with $z_0=x_0+iy_0=z(T_0)\in \mathfrak{T}(a)$, and $z_1=u_0+iv_0=z(T_1)\in P(T_0;a)$.

We define
\begin{align*}
f_{z_0,z_1}(\xi+i\eta)&=\begin{cases}
\sqrt{\dfrac{y_0}{v_0}}
\left(
\dfrac{u_0}{x_0}\,\xi+
i\dfrac{v_0}{y_0}\,\eta
\right)
\quad (\xi+i\eta\in \overline{D_b})
\\
\sqrt{\dfrac{2}{v_0}}\left(
u_0+
\sqrt{\dfrac{y_0}{2}}
\left(\dfrac{1-u_0}{1-x_0}\left(
\xi-\sqrt{\dfrac{2}{y_0}}x_0
\right)+i\dfrac{v_0}{y_0}\eta
\right)
\right)
\quad (\xi+i\eta\in \overline{D_c}).
\end{cases}
\end{align*}
One can check that the map $f_{z_0,z_1}$ is a label-preserving Lipschitz map from $T_{z_0}$ to $T_{z_1}$.
%We claim
%\begin{proposition}[Extremality]
%\label{prop:extremality_Gv}
%Let $z_0=z(T)\in \mathfrak{T}(v)$ and $z_1=z(T')\in P(T;v)$.
%Then $f_{z_0,z_1}$ is an extremal Lipschitz map from $T$ to $T'$.
%\end{proposition}

\subsubsection{}
For $0\le k_1,k_2\le 1$, we define a \emph{stretch map $f_{\mathcal{G}_v,(k_1,k_2)}$ associated with $\mathcal{G}_a$ and $(k_1,k_2)$} on $T$ as follows.
%First we consider the case of $v=a$.

Let $T\in \mathfrak{T}(a)$ be in normalised position.  We define
\begin{equation}
\label{eq:Stretch_map}
f_{\mathcal{G}_a,(k_1,k_2)}(\xi+i\eta)
=\begin{cases}
\dfrac{k_1\xi+i\eta}{(k_1x_0+k_2(1-x_0))^{1/2}}
& \left(\mbox{if $\xi+i\eta\in \overline{D_b}$}\right) \\[3mm]
\dfrac{k_1\xi_0+k_2(\xi-\xi_0)+i\eta}{(k_1x_0+k_2(1-x_0))^{1/2}}
& \mbox{($\xi+i\eta\in \overline{D_c}$)},
\end{cases}
\end{equation}
where 
%$z(T)=x_0+iy_0$ and 
$\xi_0=\sqrt{2/y_0}x_0$ (cf. \eqref{eq:normalised_parameter_of_triangle}).
%Notice that $0<x_0<1$ in this case.  
%then $f_{\mathcal{G}_a,(k_1,k_2)}=f_{z_0,z_1}$.
Since
\begin{align*}
f_{\mathcal{G}_a,(k_1,k_2)}(z_a(T))
&=\left(\dfrac{2}{y_0}\right)^{1/2}\dfrac{k_1x_0+iy_0}{(k_1x_0+k_2(1-x_0))^{1/2}} \\
f_{\mathcal{G}_a,(k_1,k_2)}(z_b(T))
&=0\\
f_{\mathcal{G}_a,(k_1,k_2)}(z_c(T))
&=\left(\dfrac{2}{y_0}\right)^{1/2}(k_1x_0+k_2(1-x_0)))^{1/2},
\end{align*}
$f_{\mathcal{G}_a,(k_1,k_2)}$ coincides with $f_{z_0,z_1}$, which maps $T$ to a triangle $T_1$ with
\begin{equation}
\label{eq:stretch_parameter_of_z}
z_1=z(T_1)=\dfrac{k_1x_0+iy_0}{k_1x_0+k_2(1-x_0)}.
\end{equation}
 We  denote such a triangle $T_1$ by ${\rm St}(T,\mathcal{G}_a,(k_1,k_2))$.
  We note that $f_{\mathcal{G}_a,(k_1,k_2)}$ is label-preserving and edge-preserving, and that $f_{\mathcal{G}_a,(1,1)}$ is the identity map.
Note also that $f_{\mathcal{G}_a,(0,k_2)}$ and $f_{\mathcal{G}_a,(k_1,0)}$ are not homeomorphisms. 
In these two cases, the images are right triangles and the maps are not injective.

\begin{remark}
\label{remark:2}
Let $\zeta_1,\zeta_2$ be points on $T$ with ${\rm Re}(\zeta_1)={\rm Re}(\zeta_2)$.
Then, we have
\begin{align*}
d_{euc}(f_{\mathcal{G}_a,(k_1,k_2)}(\zeta_1), f_{\mathcal{G}_a,(k_1,k_2)}(\zeta_2))
&=\dfrac{1}{(k_1x_0+k_2(1-x_0)))^{1/2}}d_{euc}(\zeta_1,\zeta_2)\\
&>d_{euc}(\zeta_1,\zeta_2)
\end{align*}
when $k_1k_2<1$ since $0<x_0$, $1-x_0<1$. Therefore, $f_{\mathcal{G}_a,(k_1,k_2)}$ stretches $T$ along the foliations in $D_b$ and $D_c$. This is the reason why $\mathcal{G}_a$ (and hence $\mathcal{G}_v$ for $v=b$, $c$) is called the maximal stretching locus, and the map $f_{\mathcal{G}_a,(k_1,k_2)}$ (and hence $f_{\mathcal{G}_v,(k_1,k_2)}$ for $v=b$, $c$) is named a stretch map.
\end{remark}

%\subsubsection{}
We have the following.

\begin{proposition}[Parameterisations of Pencils]
\label{prop:parameterisation_pencils}
For $v\in \{a,b,c\}$ and $T\in \mathfrak{T}(v)$, 
$z({\rm St}(T,\mathcal{G}_v,(k_1,k_2))))$ is contained in $\pencil(T;v)$ for any $0\le k_1,k_2\le 1$ with $|k_1|+|k_2|>0$.
Conversely, for any $z\in  \pencil(T;v)$, there is a unique pair $(k_1,k_2)$ with $0\le k_1,k_2\le 1$ with $|k_1|+|k_2|>0$ such that $z=z({\rm St}(T,\mathcal{G}_v,(k_1,k_2)))$. 
Therefore, the map
$$
[0,1]\times [0,1]\setminus \{(0,0)\}\ni (k_1,k_2)\mapsto {\rm St}(T,\mathcal{G}_v,(k_1,k_2))\in \pencil(T;x)
$$
is a homeomorphism.
\end{proposition}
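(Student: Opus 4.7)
The plan is to reduce the statement to the case $v=a$ using the $\mathfrak{S}_3$-symmetry described in \S\ref{subsec:symmetries}: each of the congruences $R_{ab}, R_{ac}, R_{bc}$ is a Lipschitz isometry and carries pencils, the geodesics $\mathfrak{r}_v$, the stretching loci $\mathcal{G}_v$, and the associated stretch maps equivariantly to those at the permuted vertex, so it suffices to prove the statement for one vertex. Fixing $v=a$ and $T\in\mathfrak{T}(a)$ with $z_0=x_0+iy_0$ (so $x_0\in(0,1)$, $y_0>0$), my first step would be to re-express $P(T;a)$ as an elementary region in $\mathbb{H}$. The reflection of $z'\in\mathbb{H}$ across $\mathfrak{r}_b$ is $-\overline{z'}$, and the cosh-distance formula gives $\cosh\bigl(2d(z',\mathfrak{r}_b)\bigr)=1+2\cot^2\!\arg z'$, from which a half-angle manipulation yields $d(z',\mathfrak{r}_b)=|\log\tan(\arg z'/2)|$; the analogous identity for $\mathfrak{r}_c$ reads $d(z',\mathfrak{r}_c)=|\log\tan(\arg(z'-1)/2)|$. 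Substituting $d_b=-\log\tan(\theta_b(T)/2)$ and $d_c=-\log\tan(\theta_c(T)/2)$ and using the identifications $\theta_b(T_{z'})=\arg z'$, $\theta_c(T_{z'})=\pi-\arg(z'-1)$ from \eqref{eq:parameter_z}, I would conclude, after intersecting with $\mathfrak{A}\cup\mathfrak{O}_a$ (where $\theta_b,\theta_c\le\pi/2$ in the closure), that
$$
P(T;a)=\bigl\{z'\in\mathbb{H}\mid \theta_b(T)\le\theta_b(T_{z'})\le\pi/2,\ \theta_c(T)\le\theta_c(T_{z'})\le\pi/2\bigr\}.
$$

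Next, from \eqref{eq:stretch_parameter_of_z} I have
$$
z_1:=z\bigl(\mathrm{St}(T,\mathcal{G}_a,(k_1,k_2))\bigr)=\frac{k_1x_0+iy_0}{k_1x_0+k_2(1-x_0)},
$$
and a direct computation gives
$$
\theta_b(T_{z_1})=\arctan\!\frac{y_0}{k_1x_0},\qquad \theta_c(T_{z_1})=\arctan\!\frac{y_0}{k_2(1-x_0)}
$$
(interpreted as $\pi/2$ at $k_j=0$). As $(k_1,k_2)$ ranges over $[0,1]^2\setminus\{(0,0)\}$, these expressions trace out $[\theta_b(T),\pi/2]\times[\theta_c(T),\pi/2]\setminus\{(\pi/2,\pi/2)\}$ bijectively and monotonically in each coordinate, and by the description above this is exactly the image of $P(T;a)$ under the homeomorphism $z'\mapsto(\theta_b(T_{z'}),\theta_c(T_{z'}))$. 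This establishes both the forward containment $z_1\in P(T;a)$ and the surjectivity onto $P(T;a)$.

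Uniqueness of $(k_1,k_2)$ and bicontinuity then follow by reading off the inverse from the relations $\mathrm{Re}(z_1)/\mathrm{Im}(z_1)=k_1x_0/y_0$ and $(1-\mathrm{Re}(z_1))/\mathrm{Im}(z_1)=k_2(1-x_0)/y_0$, which give
$$
k_1=\frac{y_0\,\mathrm{Re}(z_1)}{x_0\,\mathrm{Im}(z_1)},\qquad k_2=\frac{y_0\,(1-\mathrm{Re}(z_1))}{(1-x_0)\,\mathrm{Im}(z_1)};
$$
both the forward and inverse formulas are rational with non-vanishing denominators on the respective (semi-open) domains, so the map is a homeomorphism. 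The only real obstacle is in the first step: translating the hyperbolic-neighbourhood definition of $P(T;a)$ into the clean angle conditions, in particular tracking the inequality flip produced by $\theta_c=\pi-\arg(z-1)$ and understanding how the intersection with $\mathfrak{A}\cup\mathfrak{O}_a$ cuts the double-sided strip $[\theta_b(T),\pi-\theta_b(T)]$ down to the one-sided strip $[\theta_b(T),\pi/2]$. Everything after that reduces to the explicit computation of $\theta_b,\theta_c$ from $z_1$ and the straightforward inversion.
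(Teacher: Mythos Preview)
Your proposal is correct and follows essentially the same route as the paper's own proof: both reduce to $v=a$ by symmetry, compute the slope ratios $\mathrm{Im}(z_1)/\mathrm{Re}(z_1)=y_0/(k_1x_0)$ and $\mathrm{Im}(z_1)/(1-\mathrm{Re}(z_1))=y_0/(k_2(1-x_0))$ from \eqref{eq:stretch_parameter_of_z}, and arrive at the identical inverse formulas $k_1=y_0\,\mathrm{Re}(z_1)/(x_0\,\mathrm{Im}(z_1))$, $k_2=y_0(1-\mathrm{Re}(z_1))/((1-x_0)\,\mathrm{Im}(z_1))$. The only difference is that you carry out the translation from the hyperbolic $d$-neighbourhood definition of $P(T;a)$ to the angle inequalities explicitly, whereas the paper simply asserts that the slope comparisons $\mathrm{Im}(z')/\mathrm{Re}(z')\ge \mathrm{Im}(z_0)/\mathrm{Re}(z_0)$ and $\mathrm{Im}(z')/(\mathrm{Re}(z')-1)\le \mathrm{Im}(z_0)/(\mathrm{Re}(z_0)-1)$ characterise $P(T;a)$ and proceeds directly.
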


\begin{proof}
We shall prove the proposition only in the case when $v=a$.
Set $z(T)=x_0+iy_0$ and $T'=
{\rm St}(T,\mathcal{G}_v,(k_1,k_2))$ as above. From \eqref{eq:stretch_parameter_of_z}, we have $0\le {\rm Re}(z(T'))\le 1$. Since $0\le k_1$, $k_2\le 1$ and $0\le x_0\le 1$, we have
\begin{align*}
\dfrac{{\rm Im}(z(T'))}{{\rm Re}(z(T'))}
&=\dfrac{y_0}{k_1x_0}\ge \dfrac{y_0}{x_0}=\dfrac{{\rm Im}(z(T))}{{\rm Re}(z(T))} \ \hbox{and}
\\
 \dfrac{{\rm Im}(z(T'))}{{\rm Re}(z(T'))-1}
&=-\dfrac{y_0}{k_2(1-x_0)}\le -\dfrac{y_0}{1-x_0}=\dfrac{{\rm Im}(z(T))}{{\rm Re}(z(T))-1},
\end{align*}
 hence $z(T')\in \pencil(T;a)$.

Let $z=x+iy$ be a point in $\pencil(T;a)$. 
By solving the equation $z=z({\rm St}(T,\mathcal{G}_a,(k_1,k_2)))$, we obtain
$$
k_1=\dfrac{xy_0}{x_0y}, \ \ k_2=\dfrac{(1-x)y_0}{(1-x_0)y}.
$$
The condition $z=x+iy\in \pencil(T;a)$ implies that $0\le k_1,k_2\le 1$. Furthermore, $k_1=0$ implies that $z({\rm St}(T,\mathcal{G}_a,(k_1,k_2)))\in \mathfrak{r}_b$, and $k_2=0$ implies that $z({\rm St}(T,\mathcal{G}_a,(k_1,k_2)))\in \mathfrak{r}_c$. Therefore, $k_1$ and $k_2$ cannot vanish simultaneously.
\end{proof}

\begin{proposition}[Stretch maps are extremal]
\label{prop:extremal_Lip1}
For $v\in \{a,b,c\}$ and $0\le k_1$, $k_2\le 1$ with $|k_1|+|k_2|>0$, the stretch map $f_{\mathcal{G}_v,(k_1,k_2)}$ is an extremal Lipschitz map.
\end{proposition}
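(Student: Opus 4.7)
By the $\mathfrak{S}_3$-symmetry recalled in \S\ref{subsec:symmetries}, which acts by isometries of the Lipschitz metric and permutes the stretching data, it suffices to treat $v=a$. Write $z_0=x_0+iy_0=z(T)$ and $z_1=u_0+iv_0=z(T_1)$ with $T_1={\rm St}(T,\mathcal{G}_a,(k_1,k_2))$; by \eqref{eq:stretch_parameter_of_z}, $v_0/y_0=1/(k_1x_0+k_2(1-x_0))$. The plan is to sandwich $L(T,T_1)$ between two bounds that both equal $\sqrt{v_0/y_0}$, and to observe that this common value is the Lipschitz constant of $f_{\mathcal{G}_a,(k_1,k_2)}$.

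For the upper bound I would read off the Lipschitz constant of the stretch map directly from \eqref{eq:Stretch_map}. It is affine on each of $\overline{D_b}$ and $\overline{D_c}$, with linear part the diagonal matrix $(k_1x_0+k_2(1-x_0))^{-1/2}\,{\rm diag}(k_j,1)$ in the $(\xi,\eta)$-coordinates (with $j=1$ on $\overline{D_b}$ and $j=2$ on $\overline{D_c}$). Since $0\le k_j\le 1$, the operator norm of each piece equals $1/\sqrt{k_1x_0+k_2(1-x_0)}$. Because the two pieces share a segment along which the two affine expressions agree, a short triangle-inequality argument (any segment crossing between the pieces must meet this common edge) promotes this to the global bound $L(f_{\mathcal{G}_a,(k_1,k_2)})=\sqrt{v_0/y_0}$.

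For the matching lower bound I would exploit the altitude from the $a$-vertex. Let $f\colon T\to T_1$ be any label-preserving homeomorphism. As a homeomorphism of closed discs respecting labels, $f$ carries each boundary edge homeomorphically onto the corresponding edge of $T_1$; in particular $f(e_a(T))=e_a(T_1)$. In the normalised position, $z_a(T)$ lies at height $h_a(T)=\sqrt{2y_0}$ above the segment $e_a(T)$ on the real axis, and similarly $h_a(T_1)=\sqrt{2v_0}$. If $p\in e_a(T)$ is the foot of the altitude from $z_a(T)$, then $f(p)\in e_a(T_1)$ forces
$$
h_a(T_1)\le d_{euc}(z_a(T_1),f(p))=d_{euc}(f(z_a(T)),f(p))\le L(f)\,h_a(T),
$$
so $L(f)\ge h_a(T_1)/h_a(T)=\sqrt{v_0/y_0}$; taking the infimum yields $L(T,T_1)\ge\sqrt{v_0/y_0}$.

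Combining the two bounds, when $k_1,k_2>0$ the stretch map $f_{\mathcal{G}_a,(k_1,k_2)}$ is a homeomorphism realising $L(T,T_1)=\sqrt{v_0/y_0}$, so it is extremal. The main obstacle is the remaining case in which exactly one of $k_1,k_2$ vanishes: there $T_1$ is a right triangle on $\partial\mathfrak{A}$, the stretch map collapses one of the two pieces to a segment, and is no longer a homeomorphism. For this case I would approximate $(k_1,k_2)$ by parameters in $(0,1)^2$: by \Cref{prop:parameterisation_pencils} the corresponding targets ${\rm St}(T,\mathcal{G}_a,(k_1',k_2'))$ converge to $T_1$ in $\mathfrak{T}$, and because the Lipschitz metric defines the topology of $\mathfrak{T}$, the function $L(T,\cdot)$ is continuous. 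The interior case already gives $L(T,{\rm St}(T,\mathcal{G}_a,(k_1',k_2')))=\sqrt{v_0(k_1',k_2')/y_0}$, so passing to the limit delivers $L(T,T_1)=\sqrt{v_0/y_0}=L(f_{\mathcal{G}_a,(k_1,k_2)})$ as required.
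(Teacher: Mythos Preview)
Your proof is correct and follows essentially the same strategy as the paper: compute $L(f_{\mathcal{G}_a,(k_1,k_2)})=1/\sqrt{k_1x_0+k_2(1-x_0)}$ (you via the piecewise operator norms plus a crossing argument, the paper via viewing the map as a horizontal $1$-Lipschitz contraction followed by a homothety), and then bound $L(T,T_1)$ from below by the ratio of the $a$-altitudes, which coincides with this value.

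The one organizational difference is your treatment of the degenerate case $k_1k_2=0$. The paper's proof of this proposition stops after the altitude lower bound, which only yields $L(T,T_1)\ge L(f_{\mathcal{G}_a,(k_1,k_2)})$; when the stretch map is not a homeomorphism this does not immediately give the reverse inequality needed for extremality. The paper closes this gap in the proof of the subsequent \Cref{coro:distance} by exactly the approximation argument you give here (approximate by $(k_1',k_2')\in(0,1)^2$, compose with the nearly identical affine map, and pass to the limit using continuity of the Lipschitz distance). So your proof of the proposition is self-contained, whereas the paper's is completed only in the corollary.
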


\begin{proof}
As in the proof of \Cref{prop:parameterisation_pencils}, we only consider the case when $v=a$. 
By definition, $f_{\mathcal{G}_a,(k_1,k_2)}$ is  the composition of a contraction of the horizontal direction with Lipschitz constant $1$ and an expansion with factor $1/(x_0k_1+(1-x_0)k_2))^{1/2}$. Furthermore, the contraction preserves distances in the imaginary direction. 
Hence we have
\begin{equation}
\label{eq:Lip_stretch_map_G}
L(f_{\mathcal{G}_a,(k_1,k_2)})=\dfrac{1}{(x_0k_1+(1-x_0)k_2))^{1/2}}.
\end{equation}
On the other hand, the altitudes of $T$ and $T'={\rm St}(T,\mathcal{G}_a,(k_1,k_2))$ are $(2Ay_0)^{1/2}$ and $(2Ay_0)^{1/2}/(x_0k_1+(1-x_0)k_2))^{1/2}$. Since each altitude is equal to the length of the perpendicular from the $a$-vertex to its opposite side, the Lipschitz constant of any label-preserving and edge-preserving Lipschitz map from $T$ to $T'$ is at least $1/(x_0k_1+(1-x_0)k_2))^{1/2}$.
\end{proof}

\begin{corollary}[Lipschitz distance for pencils]
\label{coro:distance}
For $v\in \{a,b,c\}$ and $0\le k_1$, $k_2\le 1$ with $|k_1|+|k_2|>0$, we have
\begin{equation}
\label{eq:distance_pencil}
L(T,{\rm St}(T,\mathcal{G}_v,(k_1,k_2)))=
\begin{cases}
\dfrac{1}{2}\log \dfrac{1}{x_0k_1+(1-x_0)k_2} & (\mbox{if $v=a$}) \\
\dfrac{1}{2}\log \dfrac{|z_0|^2}{x_0k_1+(|z_0|^2-x_0)k_2} & (\mbox{if $v=b$}) \\
\dfrac{1}{2}\log \dfrac{|z_0-1|^2}{(|z_0|^2-x_0)k_1+(1-x_0)k_2} & (\mbox{if $v=c$}) \\
\end{cases}
\end{equation}
where we set $z(T)=z_0=x_0+iy_0$. In particular, if we set $z_1=z({\rm St}(T,\mathcal{G}_v,(k_1,k_2)))$, we obtain
\begin{equation}
\label{eq:distance_pencil2}
L(T_{z_0},T_{z_1})=
\begin{cases}
\dfrac{1}{2}\log \dfrac{{\rm Im}(z_1)}{{\rm Im}(z_0)} & (\mbox{if $v=a$}) \\
\dfrac{1}{2}\log
\dfrac{1}{2}\left|\log \dfrac{{\rm Im}(z_1)}{|z_1-1|^2}\dfrac{|z_0-1|^2}{{\rm Im}(z_0)}\right| & (\mbox{if $v=b$}) \\
\dfrac{1}{2}\log \dfrac{{\rm Im}(z_1)}{|z_1|^2}\dfrac{|z_0|^2}{{\rm Im}(z_0)} & (\mbox{if $v=c$}) \\
\end{cases}
\end{equation}
\end{corollary}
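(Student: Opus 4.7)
My plan splits naturally into two stages: first derive the rational formula \eqref{eq:distance_pencil}, and then rewrite it in terms of $z_1$ to obtain \eqref{eq:distance_pencil2}.

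For the first stage in the case $v=a$, the result is immediate. \Cref{prop:extremal_Lip1} asserts that the stretch map $f_{\mathcal{G}_a,(k_1,k_2)}$ is extremal, so $L(T,{\rm St}(T,\mathcal{G}_a,(k_1,k_2))) = L(f_{\mathcal{G}_a,(k_1,k_2)})$; by \eqref{eq:Lip_stretch_map_G} this Lipschitz constant equals $(x_0k_1+(1-x_0)k_2)^{-1/2}$, and passing to $d_L=\log L$ yields the first line of \eqref{eq:distance_pencil}.

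For the cases $v=b$ and $v=c$, my plan is to invoke the $\mathfrak{S}_3$-symmetries from \S\ref{subsec:symmetries}. Because $\omega_{ab}$ and $\omega_{ac}$ are induced by the Euclidean congruences $R_{ab}$, $R_{ac}$, they are $d_L$-isometries and permute the vertex labels so as to carry $\mathcal{G}_a$-stretch maps to $\mathcal{G}_b$- and $\mathcal{G}_c$-stretch maps respectively (up to swapping $k_1$ and $k_2$). Substituting $\omega_{ab}(z_0)=\overline{z_0}/(\overline{z_0}-1)$ into the $v=a$ formula, one uses ${\rm Re}(\omega_{ab}(z_0))=x_0/|z_0|^2$ and $1-{\rm Re}(\omega_{ab}(z_0))=(|z_0|^2-x_0)/|z_0|^2$, and after clearing a common factor $|z_0|^2$ one recovers the denominator $x_0k_1+(|z_0|^2-x_0)k_2$ of the $v=b$ line. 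The $v=c$ line follows from the analogous substitution $\omega_{ac}(z_0)=1/\overline{z_0}$.

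For the second stage, I would substitute the explicit parametrisation \eqref{eq:stretch_parameter_of_z} of the stretch image into \eqref{eq:distance_pencil}. In the $v=a$ case, \eqref{eq:stretch_parameter_of_z} gives ${\rm Im}(z_1)=y_0/(x_0k_1+(1-x_0)k_2)$, so the denominator equals ${\rm Im}(z_0)/{\rm Im}(z_1)$ and \eqref{eq:distance_pencil} reduces to $\tfrac{1}{2}\log({\rm Im}(z_1)/{\rm Im}(z_0))$. The $v=b,c$ lines of \eqref{eq:distance_pencil2} follow by applying the same two symmetries to \eqref{eq:stretch_parameter_of_z}.

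The main obstacle I expect is the bookkeeping in the symmetry step: one must verify carefully that $R_{ab}$ really does send the altitude foliation $\mathcal{G}_a$ at $z_0$ to $\mathcal{G}_b$ at $\omega_{ab}(z_0)$ with the correct pairing of the two stretch parameters, and then carry out the algebraic substitution cleanly so that the answer matches the asserted denominators $x_0k_1+(|z_0|^2-x_0)k_2$ and $(|z_0|^2-x_0)k_1+(1-x_0)k_2$ without stray factors. A cleaner alternative, avoiding all of this, is to repeat the short argument of \Cref{prop:extremal_Lip1} directly for $v=b$ and $v=c$: define the analogous stretch map along the altitude from the $b$-vertex (respectively $c$-vertex), compute its Lipschitz constant as a horizontal contraction composed with a uniform expansion, and apply the altitude lower bound from that proposition to conclude extremality.
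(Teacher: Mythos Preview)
Your overall strategy coincides with the paper's: treat $v=a$ via \Cref{prop:extremal_Lip1} and \eqref{eq:Lip_stretch_map_G}, reduce $v=b,c$ by the label-permuting symmetries of \S\ref{subsec:symmetries}, and then substitute \eqref{eq:stretch_parameter_of_z} to pass from \eqref{eq:distance_pencil} to \eqref{eq:distance_pencil2}. On all of this the paper does exactly what you propose (it simply writes ``cf.\ \eqref{eq:change_labels}'' for the symmetry reduction and leaves the algebra to the reader).

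There is, however, one genuine point you have overlooked: the boundary case $k_1k_2=0$. Recall that $L(T,T')$ is defined as an infimum over label-preserving \emph{homeomorphisms}, and the paper notes explicitly that $f_{\mathcal{G}_a,(0,k_2)}$ and $f_{\mathcal{G}_a,(k_1,0)}$ are \emph{not} homeomorphisms (the image is a right triangle and the map collapses a segment). The proof of \Cref{prop:extremal_Lip1} gives only the lower bound $L(T,T')\ge L(f_{\mathcal{G}_a,(k_1,k_2)})$ in that situation (via the altitude argument); the matching upper bound does not follow, because the stretch map itself is not an admissible competitor in the infimum defining $L(T,T')$. So simply citing \Cref{prop:extremal_Lip1} does not close the argument when $k_1=0$ or $k_2=0$.

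The paper handles this with a short limiting argument: for $k_1=0$ it sets $T_\epsilon={\rm St}(T,\mathcal{G}_a,(\epsilon,k_2))$, composes the (now homeomorphic) stretch map $f_{\mathcal{G}_a,(\epsilon,k_2)}$ with the label-preserving affine map $A_\epsilon\colon T_\epsilon\to T_0$, and observes
\[
L(f_{\mathcal{G}_a,(0,k_2)})\le L(T,T_0)\le L(A_\epsilon)\,L(f_{\mathcal{G}_a,(\epsilon,k_2)})\longrightarrow L(f_{\mathcal{G}_a,(0,k_2)})
\]
as $\epsilon\to 0$, using \eqref{eq:Lipschitz_constant_affine_map} to see $L(A_\epsilon)\to 1$. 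You should add this (or an equivalent approximation by homeomorphisms) to make your proof complete on the full range $0\le k_1,k_2\le 1$.
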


\begin{proof}
We shall only give a proof in the case when $v=a$ (cf. \eqref{eq:change_labels}). 
If $k_1k_2\ne 0$, the stretch map $f_{\mathcal{G}_a,(k_1,k_2)}$ is a homeomorphism. 
Therefore, \eqref{eq:distance_pencil} and \eqref{eq:distance_pencil2} follow from \eqref{eq:Lip_stretch_map_G}.

Assume now that $k_1=0$. For $\epsilon\ge 0$, we set $T_\epsilon={\rm St}(T,\mathcal{G}_a,(\epsilon,k_2))$. We take a label-preserving affine map $A_\epsilon \colon T_\epsilon\to T_0$ and set $g_\epsilon=A_\epsilon\circ f_{\mathcal{G}_a,(\epsilon,k_2)}$. Then, $g_\epsilon$ is a label-preserving Lipschitz homeomorphism from $T$ to $T_0$. From \eqref{eq:Lipschitz_constant_affine_map} and \Cref{prop:extremal_Lip1}, we have 
$$
L(f_{\mathcal{G}_a,(0,k_2)}) \le
L(T,T_0)\le L(g_\epsilon)\le L(A_\epsilon)L(f_{\mathcal{G}_a,(\epsilon,k_2)})
\to L(f_{\mathcal{G}_a,(0,k_2)})
$$
as $\epsilon\to 0$, which implies \eqref{eq:distance_pencil} and \eqref{eq:distance_pencil2} for the case when $k_1=0$. 
The case when $k_2=0$ can be dealt with in the same way.
\end{proof}

\subsubsection{Extremal Lipschitz maps are not always homeomorphisms}
Let $v$, $v'$ and $v''$ be vertices such that $\{v,v',v''\}=\{a,b,c\}$. 
Let $T$ be a triangle in $\mathfrak{AT}$. For $T'\in \pencil(T;v)\cap (\mathfrak{r}_{v'}\cup \mathfrak{r}_{v''})$, our stretch map from $T$ to $T'$ is not a homeomorphism. 
In fact, we have the following general statement.

\begin{proposition}
\label{prop:no_Lip1}
In the above setting, there is no extremal label-preserving Lipschitz homeomorphism between $T$ and $T'$. 
\end{proposition}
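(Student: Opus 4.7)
The plan is an argument by contradiction that exploits the rigidity of the altitude from the $v$-vertex under an extremal map. By the $\mathfrak{S}_3$-symmetry of \S\ref{subsec:symmetries}, realised by label-permuting congruences, the statement is invariant under relabelling; so I may assume $v=a$, and by the involution swapping $b$ and $c$ it suffices to treat the case $T'\in\mathfrak{r}_c$. Then $T'$ is a right triangle whose right angle is at its $c$-vertex.

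Write $z(T)=x_0+iy_0$ and $z(T')=x_1+iy_1$, and set $L=L(T,T')$. By \Cref{coro:distance} we have $L=\sqrt{y_1/y_0}$, and since both triangles have unit area this equals the ratio $h_{a'}/h_a$ of the altitudes from the $a$-vertex of $T'$ and $T$ (with $h_a=\sqrt{2y_0}$, $h_{a'}=\sqrt{2y_1}$). The crucial geometric observation is that since the angle at the $c$-vertex $c'$ of $T'$ is $\pi/2$, the edge $a'c'=e_b(T')$ is perpendicular to $e_a(T')=b'c'$; hence the altitude from $a'$ coincides with the edge $a'c'$, and its foot on $e_a(T')$ is exactly the vertex $c'$.

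Now suppose, towards a contradiction, that $h\colon T\to T'$ is a label-preserving Lipschitz homeomorphism with $L(h)=L$. Let $p\in e_a(T)$ be the foot of the altitude from the $a$-vertex of $T$; since $T$ is acute, $p$ lies strictly inside the open segment $(b,c)$, and in particular $p\ne c$. As a label-preserving homeomorphism is automatically edge-preserving, $h(p)\in e_a(T')$, and
$$
d_{euc}(a',h(p))\le L(h)\,d_{euc}(a,p)=L\cdot h_a=h_{a'}.
$$
But $h_{a'}$ is by definition the distance from $a'$ to $e_a(T')$, so $d_{euc}(a',h(p))\ge h_{a'}$ as well. Equality forces $h(p)$ to be the unique foot of the perpendicular from $a'$ to $e_a(T')$, which we identified above as $c'$. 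Hence $h(p)=c'=h(c)$ while $p\ne c$, contradicting the injectivity of $h$.

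The main (and really only) conceptual step is the observation that in a right triangle the foot of the altitude from the opposite vertex is itself a vertex, which forces an extremal Lipschitz map to collapse two distinct points of $e_a(T)$ to the single point $c'$. All other ingredients---the $\mathfrak{S}_3$-reduction, the identification of $L$ with an altitude ratio via \Cref{coro:distance}, and the elementary fact that the altitude foot of an acute triangle lies strictly inside the opposite edge---are routine.
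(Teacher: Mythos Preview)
Your proof is correct and follows essentially the same approach as the paper's: both identify $L(T,T')$ with the ratio of the $a$-altitudes, note that in the right triangle $T'$ the foot of that altitude is a vertex while in the acute triangle $T$ it is not, and derive a contradiction with injectivity. The only cosmetic differences are that the paper treats the case $T'\in\mathfrak{r}_b$ and phrases the argument as a direct strict inequality $L(T,T')<L(g)$ rather than as a contradiction from $L(h)=L$.
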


\begin{proof}
Let $g\colon T\to T'$ be a label-preserving Lipschitz homeomorphism. We may assume that $v=a$ and $T'\in \mathfrak{r}_b$. 
As was shown in the proof of \Cref{prop:extremal_Lip1}, the Lipschitz constant of the stretch map is the ratio of the altitudes of $T$ and $T'$. 

Let $p\in e_a(T)$ be the foot of the perpendicular in $T$ from the $a$-vertex $z_a(T)$ to the opposite edge $e_a(T)$. Since $T$ is an acute triangle, $p$ is different from the $b$-vertex $z_b(T)$ of $T$. Since $g$ is a homeomorphism, $g(p)\ne z_b(T')$ but $g(p)\in e_a(T')$. Therefore,
$$
L(T,T')=\dfrac{d_{euc}(z_a(T'),z_b(T'))}{d_{euc}(z_a(T),p)}
<\dfrac{d_{euc}(z_a(T'),g(p))}{d_{euc}(z_a(T),p)}\le L(g).
$$
Hence $g$ is not extremal.
\end{proof}

\subsection{Extremal Lipschitz maps associated with backward pencils}
%\subsection{Stretching map associated to
% $\mathcal{F}^{v',\theta}_v$}
\subsubsection{Contractions}
Let $z_0\in \mathbb{H}$ be a point in the ideal triangle with vertices $0$, $1$ and $\infty$.
Let $z_1=u_0+iv_0$ be a point in $BP(T;a)$. 
We divide $\tilde{T}_{z_0}$ into two right angled triangles by the perpendicular from the $b$-vertex $0$ of $\tilde{T}_{z_0}$ and  its $ac$-side.
For $x\in \{a,c\}$,
we denote by $D^x_b=D^x_b(\tilde{T}_{z_0})$ the component which contains the $x$-vertex in its boundary. We define the \emph{contraction $C^b_{z_0,z_1}$ on $\tilde{T}_{z_0}$ associated with the $b$-vertex} by
$$
C^b_{z_0,z_1}(\zeta)=
\begin{cases}
\zeta & (\zeta\in D_b^c) \\
\dfrac{1+k}{2}\zeta+
\dfrac{1-k}{2}\dfrac{\zeta_0}{\overline{\zeta_0}}\,\overline{\zeta}
& (\zeta\in D_b^a), 
\end{cases}
$$
where $\zeta_0=i \dfrac{{\rm Im}(z_0)}{1-\overline{z_0}}$ and
$\displaystyle
k = \dfrac{y_0}{|z_0|^2-x_0}\dfrac{{\rm Re}(z_1(\overline{z_0}-1))}{{\rm Im}(z_1(\overline{z_0}-1))}.
$
Notice that $\zeta_0$ is the foot of the perpendicular from the $b$-vertex $0$ of $\tilde{T}_{z_0}$ to the $ac$-side. 
We can check easily that $0 < k\le 1$. Indeed, by assumption, $z_1\in D_b^a$. 
By a calculation, we can see that a Euclidean ray emanating from the $b$-vertex $0$ passing by $z_1$ intersects the $ac$-side of $\tilde{T}_{z_0}$ at
\begin{equation}
\label{eq:contraction_zeta_1}
\zeta_1=\dfrac{y_0}{{\rm Im}(z_1(\overline{z_0}-1))}w_0.
\end{equation}
We can also see that $k=|\zeta_1-\zeta_0|/|z_0-\zeta_0|$, and hence that $0<k\le 1$. 
By definition, $C^b_{z_0,z_1}$ is  a contraction from the triangle with vertices $0$, $z_0$ and $\zeta_0$ to the one with vertices $0$, $\zeta_1$ and $\zeta_0$,
and takes $\tilde{T}_{z_0}$ to $\tilde{T}_{\zeta_1}$.
See the map represented by the right-lower arrow in Figure \ref{fig:Stretch_F}.

We define the \emph{contraction $C^c_{z_0,z_1}$ associated with the $c$-vertex} by
$$
C^c_{z_0,z_1}(\zeta)=
1-\overline{C^b_{1-\overline{z_0},1-\overline{z_1}}(1-\overline{\zeta})},
$$
 obtained by conjugating the contraction $C^b_{\omega_{bc}(z_0), \omega_{bc}(z_0)}$ on the vertical line in $\mathbb{C}$ passing through the midpoint $1/2$ between the $b$-vertex $0$ and the $c$-vertex $1$ of $\tilde{T}_{z_0}$
(see \S\ref{subsec:symmetries}).

We define the \emph{contraction $G^b_{z_0,z_1}$ from $\tilde{T}_{z_0}$ and $\tilde{T}_{z_1}$ associated with the $b$-vertex} by
$$
G^b_{z_0,z_1}(\zeta)=C^c_{\zeta_1,z_1}\circ C^b_{z_0,z_1}(\zeta)
$$
for $\zeta_1$ defined in \eqref{eq:contraction_zeta_1}.

We also define the \emph{contraction $G^c_{z_0,z_1}$ from $\tilde{T}_{z_0}$ and $\tilde{T}_{z_1}$ associated with the $b$-vertex} and also satisfying the properties of \Cref{prop:Lip_contractions} just by interchanging the roles of the $b$ and $c$-vertices. 

The following proposition is an immediate consequence of the definition.

\begin{proposition}[Lipschitz constants of contractions]
\label{prop:Lip_contractions}
Let $z_0$ be a point in  $\mathfrak{A}$ and $z_1$ a point in $BP(T_{z_0};a)$.
Let $x$ be either $b$ or $c$.
\begin{itemize}
\item[(1)]
The Lipschitz constant of the contraction $G^x_{z_0,z_1}$ is 1. 
\item[(2)]
After identifying $T_{z_0}$ with $\tilde{T}_{z_0}$ by a similarity with factor $\sqrt{{\rm Im}(z_0)/2}$, the Lipschitz constant of the contraction  is attained at two points both of which lie in either the expanding region $E_a$ or on any leaf of the foliation $\mathcal{F}^{x,\theta}_a$, where $\theta$ is the angle at $\zeta_1$ of $\tilde{T}_{\zeta_1}$.
\end{itemize}
\end{proposition}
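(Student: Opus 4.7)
The plan is to prove both parts by unpacking the piecewise definitions of $C^b_{z_0,z_1}$ and $C^c_{\zeta_1,z_1}$ and then tracking the composition $G^x_{z_0,z_1}=C^c_{\zeta_1,z_1}\circ C^b_{z_0,z_1}$.

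First I would verify that each factor is $1$-Lipschitz and is the identity on half of its domain. For $C^b_{z_0,z_1}$, multiplying by $\bar\zeta_0/|\zeta_0|$ places $\zeta_0$ on the positive real axis, and the defining formula $\tfrac{1+k}{2}\zeta+\tfrac{1-k}{2}\tfrac{\zeta_0}{\bar\zeta_0}\bar\zeta$ on $D^a_b$ becomes the coordinate map $(U,V)\mapsto(U,kV)$ with $0<k\le 1$. Thus $C^b_{z_0,z_1}$ is isometric in the $U$-direction (the direction of the altitude $[0,\zeta_0]$ from $b$) and contracts by $k$ in the perpendicular $V$-direction on $D^a_b$, while being the identity on $D^c_b$; the two pieces agree along the altitude, so the global map is $1$-Lipschitz, and Lip$(C^b_{z_0,z_1})=1$ because of the identity piece. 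The analogous conclusion for $C^c_{\zeta_1,z_1}$ follows directly from its definition as the conjugation of $C^b$ by the label swap $\omega_{bc}$.

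For part~(1), the composition $G^x_{z_0,z_1}$ is $1$-Lipschitz. For the reverse inequality, $G^x$ acts as the identity on the intersection $D^c_b(\tilde T_{z_0})\cap D^b_c(\tilde T_{\zeta_1})$, which is a non-degenerate triangle containing the edge $e_a=[0,1]$ of both $\tilde T_{z_0}$ and $\tilde T_{\zeta_1}$. Any two distinct points on $e_a$ then witness Lip$(G^x_{z_0,z_1})\ge 1$, so Lip$(G^x_{z_0,z_1})=1$.

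For part~(2), I would first check that the identity region above is a subtriangle of the expanding region $E_a$, so any two distinct points on $e_a\subset E_a$ already realise the Lipschitz constant. For each leaf of $\mathcal F^{x,\theta}_a$ in $D_b$ (parallel to the altitude $rb$), $C^b_{z_0,z_1}$ is isometric along the leaf (its preserved $U$-direction on $D^a_b$), and I would locate two points on it whose $C^b$-images both lie in the identity region $D^b_c(\tilde T_{\zeta_1})$ of $C^c_{\zeta_1,z_1}$, so that $G^x$ preserves their distance. For each leaf in $D_c$ (parallel to $cp$), the map $C^b_{z_0,z_1}$ is the identity on $D_c\subset D^c_b$, and the choice $\theta=\theta_a(\tilde T_{\zeta_1})$ is what aligns $cp$ with the isometric direction of $C^c_{\zeta_1,z_1}$ on its contracting region; two points on each such leaf then witness the Lipschitz constant by the analogous argument.

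The step I expect to be the main obstacle is the angle identification in part~(2): verifying that $\theta_a(\tilde T_{\zeta_1})$ is precisely the angle $\angle bpc$ that makes the segment $cp$ parallel to the altitude from $c$ inside $\tilde T_{\zeta_1}$, so that leaves in $D_c$ lie along an isometric direction of $C^c_{\zeta_1,z_1}$. This should reduce to an angle chase in the right triangle cut off by the altitude from $1$ in $\tilde T_{\zeta_1}$, using the key observation that $\zeta_1$ lies on the edge $e_b$ of $\tilde T_{z_0}$ and therefore the $c$-angle of $\tilde T_{\zeta_1}$ coincides with that of $\tilde T_{z_0}$.
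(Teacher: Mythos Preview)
The paper gives no proof of this proposition at all: it simply states that it ``is an immediate consequence of the definition.'' Your proposal is exactly the natural unpacking of that claim---rotating so that the altitude from the relevant vertex lies along the real axis, recognising each $C^{\,\cdot}$ as the identity on one half-triangle and the coordinate contraction $(U,V)\mapsto(U,kV)$ on the other, and then reading off the Lipschitz constant and the isometric directions of the composition.

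Your argument for (1) is complete and correct; the observation that the edge $e_a=[0,1]$ lies in the identity region of both factors gives the lower bound cleanly. For (2), your outline is on target: the common identity region sits inside $E_a$, leaves in $D_b$ lie along the preserved $U$-direction of $C^b$, and the choice $\theta=\theta_a(\tilde T_{\zeta_1})$ is precisely what makes the $D_c$-leaves (parallel to $cp$) align with the preserved direction of $C^c_{\zeta_1,z_1}$. The angle-chase you flag as the main obstacle is indeed the only substantive verification left, and it goes through as you describe using that $\zeta_1\in e_b(\tilde T_{z_0})$ forces $\theta_c(\tilde T_{\zeta_1})=\theta_c(\tilde T_{z_0})$; from there the identity $\angle bpc=\theta$ pins down the direction of $cp$ as perpendicular to $[0,\zeta_1]$, i.e.\ along the altitude from $c$ in $\tilde T_{\zeta_1}$. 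One small sharpening: rather than merely locating two points on each $D_b$-leaf whose $C^b$-images land in the identity region of $C^c$, it is cleaner (and equally elementary) to check that the image under $C^b$ of the whole triangle $D_b$ lies in $D^b_c(\tilde T_{\zeta_1})$, so that $G^b$ is isometric on each such leaf in its entirety.
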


\subsubsection{Extremal maps}
Let $T$ be a (marked) acute triangle and $z_0=x_0+iy_0=z(T)\in \mathfrak{A}$.
Let $z_1=u_0+iv_0\in BP(T;a)$.
We define the piecewise linear maps $g^b_{z_0,z_1}$ and $g^c_{z_0,z_1}$ from $T_{z_0}$ to $T_{z_1}$ by
$$
g^x(\zeta)=\sqrt{\dfrac{2}{v_0}}
G^b_{z_0,z_1}
\left(
\sqrt{
\dfrac{y_0}{2}
}
\zeta
\right)
$$
for $x=b$ or $c$.
\begin{proposition}[Extremal Lipschitz maps for backward pencils]
\label{prop:Lip_constant_backward_pencils}
Let $z_0\in \mathfrak{A}$ and $z_1\in BP(T_{z_0};a)$.
For $x\in \{b,c\}$, we have the following:
\begin{itemize}
\item[(1)]
The contraction $g^x_{z_0,z_1}$ is an extremal Lipschitz map from $T_{z_0}$ to $T_{z_1}$ with Lipschitz constant $\sqrt{{\rm Im}(z_0)/{\rm Im}(z_1)}$.
\item[(2)]
The Lipschitz constant is attained by two points both of which lie either in the expanding region $E_a$ or on each leaf of the foliation $\mathcal{F}^{x,\theta}_a$, where $\theta$ is defined in the same manner as in \Cref{prop:Lip_contractions}.
\end{itemize}
\end{proposition}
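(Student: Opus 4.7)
The plan is to build the proof by transporting Proposition \ref{prop:Lip_contractions} from the normalised triangles $\tilde{T}_{z_0}, \tilde{T}_{z_1}$ back to the unit-area triangles $T_{z_0}, T_{z_1}$ via the similarities $\zeta\mapsto \sqrt{y_0/2}\,\zeta$ and $\zeta\mapsto \sqrt{2/v_0}\,\zeta$ that appear in the definition of $g^x_{z_0,z_1}$, and then to obtain the extremality by a separate lower bound that uses only the length of the $a$-edge.

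For the computation of $L(g^x_{z_0,z_1})$, I would write $g^x_{z_0,z_1}$ as the composition of (i) the similarity $\zeta\mapsto \sqrt{y_0/2}\,\zeta$ carrying $T_{z_0}$ onto $\tilde{T}_{z_0}$, (ii) the contraction $G^x_{z_0,z_1}\colon \tilde{T}_{z_0}\to \tilde{T}_{z_1}$, and (iii) the similarity $\zeta\mapsto \sqrt{2/v_0}\,\zeta$ carrying $\tilde{T}_{z_1}$ onto $T_{z_1}$. The Lipschitz constants of these three maps are $\sqrt{y_0/2}$, $1$ (by Proposition \ref{prop:Lip_contractions}(1)), and $\sqrt{2/v_0}$; since the bracketing middle map realises its Lipschitz constant globally, the three Lipschitz constants multiply to give $L(g^x_{z_0,z_1})=\sqrt{y_0/v_0}=\sqrt{\mathrm{Im}(z_0)/\mathrm{Im}(z_1)}$.

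To obtain extremality, I would establish the matching lower bound. Any label-preserving homeomorphism $h\colon T_{z_0}\to T_{z_1}$ is automatically edge-preserving and sends the $a$-edge (a segment between the two labelled endpoints) onto the $a$-edge of $T_{z_1}$; since the length of $e_a(T_{z})$ equals $\sqrt{2/\mathrm{Im}(z)}$, the ratio of lengths forces $L(h)\ge \sqrt{y_0/v_0}$. Hence $L(T_{z_0},T_{z_1})\ge \sqrt{y_0/v_0}$, and the upper bound above shows this inequality is an equality achieved by $g^x_{z_0,z_1}$, which proves (1). For (2), Proposition \ref{prop:Lip_contractions}(2) provides two points in $\tilde{T}_{z_0}$ at which $G^x_{z_0,z_1}$ attains its Lipschitz constant, lying either in $E_a$ or on a common leaf of $\mathcal{F}^{x,\theta}_a$; pulling these back through the similarity $\sqrt{y_0/2}$ and pushing forward through $\sqrt{2/v_0}$ preserves all distance ratios, so the corresponding pair of points in $T_{z_0}$ attains $L(g^x_{z_0,z_1})$, still in the region/leaves described in \Cref{prop:Lip_contractions}(2) after identification.

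The main obstacle is really nothing more than the bookkeeping inside (1): one must verify that the constants in the definition of $g^x_{z_0,z_1}$ indeed match the area normalisations so that the similarities truly carry $T_z$ to $\tilde{T}_z$ and vice versa, and that the foliations $\mathcal{F}^{x,\theta}_a$ and expanding region $E_a$ are invariant under these similarities (so that the statement of (2) transfers verbatim from $\tilde{T}_{z_0}$ to $T_{z_0}$). Everything substantive has already been packaged into \Cref{prop:Lip_contractions}.
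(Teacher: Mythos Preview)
Your proposal is correct and follows essentially the same route as the paper. The paper's proof is simply terser: it absorbs your three-factor decomposition of $L(g^x_{z_0,z_1})$ into a one-line invocation of \Cref{prop:Lip_contractions} (the similarity identification is already built into that statement), and then gives exactly your lower bound via the length of $e_a(T_z)=\sqrt{2/\mathrm{Im}(z)}$; part~(2) is likewise read off directly from \Cref{prop:Lip_contractions}(2) through the same similarities you describe.
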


\begin{proof}
From \Cref{prop:Lip_contractions}, we need only verify that $g^x_{z_0,z_1}$ is extremal.
Let $g\colon T_{z_0}\to T_{z_1}$ be a label-preserving and edge-preserving Lipschitz map. Since the length of the edge $e_a(T_{z_i})$ is $\sqrt{2/{\rm Im}(z_i)}$ for $i=0,1$, the Lipschitz constant of $L(g)$ satisfies
$$
L(g)\ge \dfrac{\sqrt{2/{\rm Im}(z_1)}}{\sqrt{2/{\rm Im}(z_0)}}
=\sqrt{\dfrac{{\rm Im}(z_1)}{{\rm Im}(z_0)}}=L(g^x_{z_0,z_1}),
$$
which implies what we wanted.
\end{proof}

%
%
%We first treat the case of $v=a$ and $v'=b$. Let $0\le k_1,k_2\le 1$. We define
%$$
%\theta=\theta(T,k_1,a)=\theta_a(T)+
%\arctan\left(
%k_1\cot \theta_a(T)
%\right),
%$$
%and $\theta_a(T)<\theta<\pi/2$, we define a map $f$ from the $\mathcal{F}_a^{b,\theta}$ as follows. We set $T$ in the normal position, and use the notation given in the left-most triangle of Figure \ref{fig:foliation3}. Let $z_0=x_0+iy_0=z_a(T)$, 
%$\xi_0=z_c(T)=(2A/y_0)^{1/2}$, $\alpha=\pi/2-\theta_c(T)$, and $\beta=\pi-(\theta_b(T)+\theta)$. 
%We define partially affine map  $g_1$ and $g_2$ on $\mathbb{C}$ by
%\begin{align*}
%g_1(\zeta)
%&=
%\begin{cases}
%e^{i\alpha}({\rm Re}(e^{-i\alpha}\zeta)+ik_1{\rm Im}(e^{-i\alpha}\zeta)) & (\mbox{if ${\rm Im}(e^{-i\alpha}\zeta)\ge 0$}) \\
%\zeta & (\mbox{otherwise}).
%\end{cases}
%\\
%&=
%\begin{cases}
%\dfrac{1+k_1}{2}\zeta+ie^{2i\alpha}\dfrac{1-k_1}{2}\overline{\zeta} & (\mbox{if ${\rm Im}(e^{-i\alpha}\zeta)\ge 0$}) \\
%\zeta & (\mbox{otherwise}).
%\end{cases}
%\\
%g_2(\zeta)
%&=
%\begin{cases}
%\xi_0+e^{-i\beta}({\rm Re}(e^{i\beta}(\zeta-\xi_0))+ik_2{\rm Im}(e^{i\beta}(\zeta-\xi_0))) & (\mbox{if ${\rm Im}(e^{i\beta}(\zeta-\xi_0))\ge 0$}) \\
%\zeta & (\mbox{otherwise}).
%\end{cases}
%\\
%&=
%\begin{cases}
%\dfrac{(1-k_2)\xi_0(1-e^{-2i\beta})}{2}+\dfrac{1+k_2}{2}\zeta+\dfrac{1-k_2}{2}e^{-2i\beta}\overline{\zeta}& (\mbox{if ${\rm Im}(e^{i\beta}(\zeta-\xi_0))\ge 0$}) \\
%\zeta & (\mbox{otherwise}).
%\end{cases}
%\end{align*}
\begin{figure}
\includegraphics[height = 8cm]{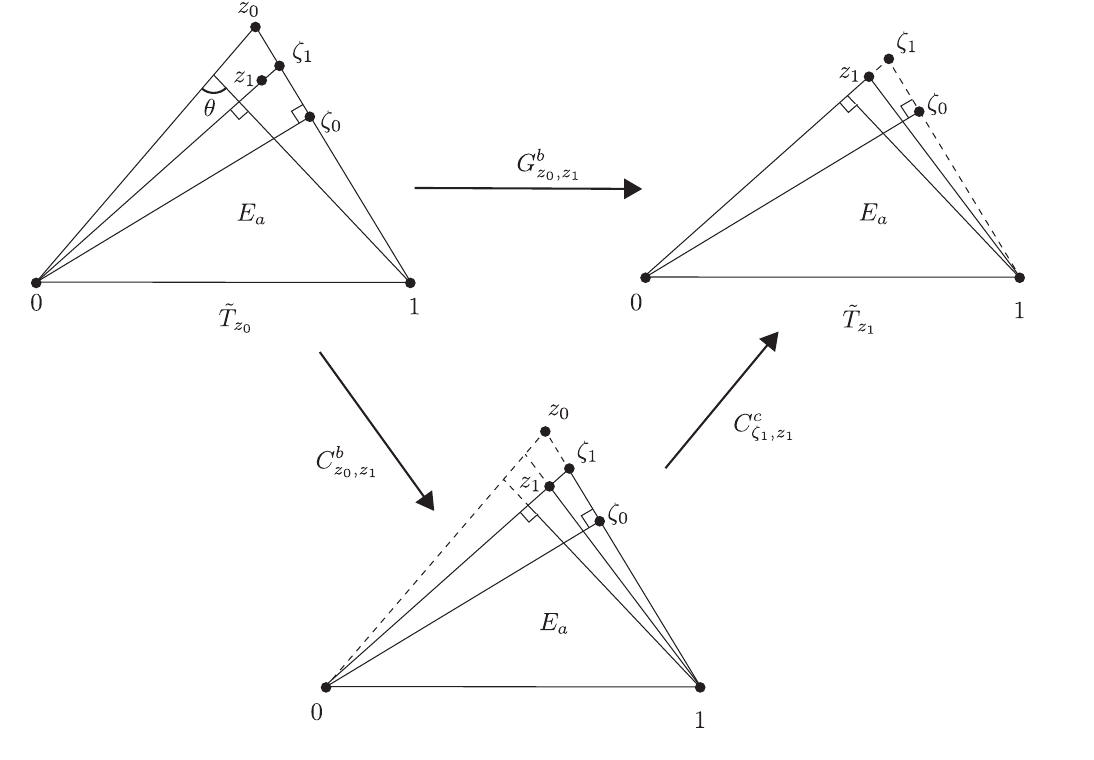}
\caption{Contractions $C^b_{z_0,z_1}$, $C^c_{z_0,z_1}$ and $G^b_{z_0,z_1}$.}
\label{fig:Stretch_F}
\end{figure}

\medskip

\noindent{\bf Acknowledgements} This paper was written during a stay of the three authors at Institut Henri Poincaré (Paris). The authors thank this institute for its support. Ken'ichi Ohshika was partially supported by the Fund for the Promotion of Joint International Research (Japan), 18KK0071. Hideki Miyachi was partially supported by the Fund for the Promotion of Joint International Research (Japan), 20H01800.

\end{document}